\documentclass[11pt,reqno]{amsart}

\usepackage[T1]{fontenc}
\usepackage{lmodern}
\usepackage[a4paper,margin=1in]{geometry}
\usepackage{amsmath,amssymb,amsthm,mathtools}
\usepackage{microtype}
\usepackage[hidelinks]{hyperref}

\numberwithin{equation}{section}

\newtheorem{theorem}{Theorem}[section]
\newtheorem{proposition}[theorem]{Proposition}
\newtheorem{lemma}[theorem]{Lemma}
\newtheorem{corollary}[theorem]{Corollary}

\theoremstyle{remark}
\newtheorem{remark}[theorem]{Remark}

\newcommand{\D}{\mathbb D}
\newcommand{\T}{\mathbb T}

\newcommand{\R}{\mathbb R}

\newcommand{\Hh}{\mathbb H}

\newcommand{\Def}{\operatorname{Def}}
\newcommand{\calM}{\mathcal M}
\newcommand{\calI}{\mathcal I}
\newcommand{\calG}{\mathcal G}
\newcommand{\calA}{\mathcal A}
\newcommand{\Gap}{\mathsf G}

\newcommand{\red}{\widehat R}
\newcommand{\ee}[1]{e^{2\pi i #1}}

\title[Endpoint counterexamples]{Beurling--Carleson Endpoint Counterexamples:
Singular Inner Functions and a Semilinear Equation}

\author[P.~C.~Fang]{Pengcheng Fang}
\address{School of Mathematics, Shanghai University of Finance and Economics, Shanghai 200433, P. R. China}
\email{fpc426853@gmail.com}

\author[Y.~He]{Yixin He}
\address{School of Mathematical Sciences, Fudan University, Shanghai 200433, P. R. China}
\email{yixin.he717@gmail.com}

\subjclass[2020]{Primary 30H10, 35J91; Secondary 30J15, 31A20, 31C35, 31C40}
\keywords{Beurling--Carleson set, singular inner function, Hardy space,
nearly maximal solution, deficiency measure}

\begin{document}
\begin{abstract}
We settle two endpoint problems for Beurling--Carleson support
conditions. For \(0<s<\frac12\) and
\(\theta=\frac{1-2s}{1-s}\), we construct a nonatomic singular
probability measure \(\mu\), supported on a single
\(\theta\)-Beurling--Carleson set, such that
\(S_\nu'\notin H^s\) for every nonzero submeasure
\(0<\nu\leq\mu\). For \(m>3\) and
\(\alpha=\frac{m-3}{m-1}\), we construct a nonatomic probability
measure supported on a single \(\alpha\)-Beurling--Carleson set that is
not the deficiency measure of any nearly maximal solution of
\(\Delta u=(u_+)^m\). Thus hereditary failure persists at the first
endpoint, while the critical support condition in the second problem
is necessary but not sufficient. The proofs combine endpoint
Cantor--Moran constructions with kernel divergence and a nonlinear
energy obstruction.
\end{abstract}

\maketitle

\section{Introduction}
Let $0<\eta<1$.  A closed null set $E\subset\T$ is called an
$\eta$-Beurling--Carleson set if
\[
\sum_{J\subset\T\setminus E}|J|^\eta<\infty,
\]
where the sum is over the complementary arcs of $E$.  This power-gauge
condition belongs to the exceptional-set theory originating with Beurling and
Carleson \cite{Beu40,Car52}.  We write $\calM_\eta(\T)$ for the
finite positive measures concentrated on countable unions of such sets.  The
purpose of this paper is to settle two endpoint questions from
\cite{IN24} in which this boundary geometry occurs.

The two questions share the same critical defect.  Away from the endpoint, a
gain in the power of $|J|$ absorbs the distribution of mass; at the endpoint, the
geometric support sum may stay finite while accumulation over scales diverges.
We treat the analytic and elliptic problems together to isolate this separation.

We first consider the analytic problem.  For a finite positive singular
measure $\mu$ on $\T$, let
\[
S_\mu(z)=\exp\!\left(-\int_{\T}
\frac{\xi+z}{\xi-z}\,d\mu(\xi)\right)
\]
be the associated singular inner function.  The relation between the boundary concentration of $\mu$ and the Hardy regularity of $S_\mu'$ goes back to \cite{AC74,Ahe79,Cul71}; see also \cite{Ivrii19,Mas13}.

Fix $0<s<\tfrac12$ and put
\[
q_c=\frac{s}{1-s},
\qquad
\theta=1-q_c=\frac{1-2s}{1-s}.
\]
Ivrii and Nicolau proved that the condition
$\sum_J|J|^{1-q}<\infty$ implies $S_\sigma'\in H^s$ for every singular
measure supported on $E$ when $q>q_c$.  They also constructed one bad measure
at $q=q_c$, and a measure with hereditary failure for every $q<q_c$; see
\cite[Lemmas~5.4--5.5]{IN24}.  The endpoint example for one measure does not
formally imply hereditary failure, since removing mass may restore the
integrability of the positive boundary kernel.  The first result below shows
that hereditary failure nevertheless persists at the critical exponent.

\begin{theorem}\label{thm:main}
Let $0<s<\tfrac12$ and set
\(\theta=(1-2s)/(1-s)\).
Then there exist a closed set $E\subset\T$ of Lebesgue measure zero
and a nonatomic singular probability measure $\mu$ supported on $E$
such that
\[
\sum_J |J|^\theta<\infty,
\]
where the sum is taken over all complementary arcs $J$ of $E$, and
for every nonzero finite positive Borel measure $\nu$ satisfying
\(0<\nu\le\mu\),
\[
S_\nu'\notin H^s.
\]
\end{theorem}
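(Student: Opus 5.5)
We construct $E$ as a Cantor--Moran set with generation-dependent branching, chosen so that the Beurling--Carleson sum $\sum_J|J|^\theta$ converges only barely. We take $\mu$ to be the natural self-similar measure on $E$. The failure $S_\nu'\notin H^s$ will follow from a lower bound on the integral means of $|S_\nu'|^s$ that depends only on local mass ratios. Those ratios are controlled for every submeasure by a density argument.

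\emph{Step 1 (the construction).} At generation $n$ each surviving arc $I$ of length $\ell_{n-1}$ is split into $N_n$ equal subarcs of length $\ell_n=\ell_{n-1}/(N_n\lambda_n)$, separated by gaps of size comparable to $\ell_{n-1}/N_n$. Mass is split equally, so $\mu(I)=1/(N_1\cdots N_n)=:m_n$. The total contribution of generation-$n$ gaps to $\sum_J|J|^\theta$ is about $m_n^{-1}(\ell_{n-1}/N_n)^\theta$ up to constants. We choose the parameters so that these terms are summable, for instance with a factor $n^{-2}$ in the gap exponent, while the mass--length relation sits exactly at the critical scaling $m_n\approx \ell_n^{1-q_c}\cdot(\text{slowly varying correction})$. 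This is the endpoint Cantor--Moran mechanism. The measure is nonatomic because $m_n\to0$, and singular because $|E|=0$.

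\emph{Step 2 (the kernel criterion).} We use the standard lower bound $|S_\nu'(z)|\gtrsim |S_\nu(z)|\,P[\nu](z)/(1-|z|)$ together with $|S_\nu(z)|=e^{-P[\nu](z)}$. At points $z$ above a gap adjacent to a generation-$n$ arc $I$ with $\nu(I)\gtrsim|I|$-scaled mass, we get $P[\nu](z)\asymp 1$. There $|S_\nu'|\gtrsim 1/\operatorname{dist}$. Integrating over the Carleson box of each such gap $J$ contributes roughly $|J|^{1-s}$ to the $H^s$ means, in the form $\sup_r\int|S'_\nu(re^{it})|^s dt$, or equivalently, via the Ahern--Clark characterization, $\int_\T\big(\int d\nu(\xi)/|\xi-e^{it}|^2\big)^{s}dt$. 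For $\nu\le\mu$, the key quantity is
\[
\sum_n\sum_{I\in\mathcal I_n}\nu(I)^{s}\,\ell_n^{1-2s}\cdot(\ldots)
\]
coming from arcs $|t-\xi|\sim\ell_n$ near $I$. At the critical relation this sum behaves like $\sum_n \nu(E_n^\ast)$ times a nondecreasing factor. The parameters are chosen so that it is divergent, for instance with harmonic series growth.

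\emph{Step 3 (heredity).} Given $0<\nu\le\mu$, we have $\nu=f\mu$ with $0\le f\le1$ and $\int f\,d\mu>0$. By the martingale (Lebesgue differentiation) theorem along the Moran filtration, there exist $\delta>0$ and a set $A$ with $\mu(A)>0$ such that $\nu(I)\ge \delta\mu(I)$ for every generation-$n$ arc $I\ni x$ with $x\in A$ and $n\ge n_0(x)$. Passing to a subset, $n_0$ can be taken uniform. Summing the Step 2 contributions over arcs meeting $A$ then gives a lower bound of order $\delta^s\mu(A)\sum_{n\ge n_0}c_n=\infty$. Since the positive kernel divergence is local, removing mass cannot cancel it.

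\emph{Main obstacle.} The difficulty is making the convergence of $\sum|J|^\theta$ and the divergence in Step 2 coexist exactly at $\theta=1-q_c$. Both are governed by the same scale sums, so the slowly varying correction in $N_n,\lambda_n$ must be tuned carefully; I would first try $N_n\approx 2^n$ with logarithmic corrections. A second difficulty is justifying that the contributions from different scales do not overlap too much. This requires checking that the Carleson boxes of distinct gaps are essentially disjoint and that $P[\nu]$ stays bounded there, which uses the separation built into the construction.
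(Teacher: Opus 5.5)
Your overall plan (Ahern--Clark kernel on the gaps, a lower bound from the adjacent cylinder's mass, summation over disjoint gaps) is the paper's. The gap is that the decisive endpoint computation is missing, and the one concrete choice you offer would destroy the divergence. Write $x_n=m_n^{-1}g_n^\theta$ for the level-$n$ Beurling--Carleson contribution, where $g_n$ is the gap length. For $x$ in the gap adjacent to a generation-$n$ arc $I$, $K_\nu(x)\gtrsim\nu(I)/(\mathrm{dist}(x,I)+\ell_n)^2$, so that gap contributes $\gtrsim\nu(I)^s g_n^{1-2s}$ to $\int K_\nu^s$ when $g_n\gtrsim\ell_n$. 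Since $\nu(I)\le\mu(I)=m_n$ and $s<1$, one has $\nu(I)^s\ge m_n^{s-1}\nu(I)$. Hence level $n$ contributes at least $c\,\nu(\T)\,m_n^{s-1}g_n^{1-2s}$, and because $\theta(1-s)=1-2s$,
\[
m_n^{s-1}g_n^{1-2s}=\bigl(m_n^{-1}g_n^{\theta}\bigr)^{1-s}=x_n^{1-s}.
\]
So the theorem reduces to choosing $x_n$ with $\sum x_n<\infty=\sum x_n^{1-s}$, for instance $x_n=(n+1)^{-1/(1-s)}$. This is exactly what the paper's binary choice $L_n=L_0\,2^{-n/\theta}(n+1)^{-1/(1-2s)}$ achieves, with $g_{n+1}\asymp L_n$. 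If, as you suggest, the level-$n$ gap terms are of size $n^{-2}$, then $\sum x_n^{1-s}\asymp\sum n^{-2(1-s)}<\infty$ because $s<\frac12$, and the kernel integral converges. For the same reason the factor multiplying the mass cannot be ``nondecreasing'': it equals $x_n^{1-s}\to0$. If $\lambda_n\to\infty$, you must also integrate over the whole gap rather than only where $|t-\xi|\sim\ell_n$, or you lose a factor $\lambda_n^{1-2s}$. Note that the inequality $\nu(I)^s\ge m_n^{s-1}\nu(I)$ already gives heredity, with a bound proportional to $\nu(\T)$ at every level. Your martingale--Egorov Step~3 is correct, but the paper does without it.

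Second, the interior version of Step~2 does not work; drop it rather than offering it as equivalent to the boundary version. The bound $|S_\nu'(z)|\gtrsim|S_\nu(z)|P[\nu](z)/(1-|z|)$ is false pointwise in $\D$, because $S_\nu'/S_\nu=-\int 2\xi(\xi-z)^{-2}\,d\nu(\xi)$ can cancel. For atoms at $e^{\pm i\phi}$ it vanishes at $z=r$ when $\cos\phi=2r/(1+r^2)$. More importantly, at height $\ell_n$ above a good generation-$n$ arc one has $P[\nu]\gtrsim\delta m_n/\ell_n\to\infty$, since $m_n\approx\ell_n^\theta\cdot(\ldots)\gg\ell_n$. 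So $|S_\nu|=e^{-P[\nu]}$ is negligible there, and ``$P[\nu]\asymp1$'' never occurs. In addition, box contributions at different heights do not add up in $\sup_r\int|S_\nu'(re^{it})|^s\,dt$. The argument has to live on the circle. Off the closed null set $E$, $|S_\nu|=1$ and $|S_\nu'(\ee{x})|=2K_\nu(x)$ with no cancellation, because for $|z|=1$ each term $\xi(\xi-z)^{-2}$ equals $-\bar z/|\xi-z|^2$. If $S_\nu'\in H^s$, this boundary function would lie in $L^s(\T)$. On the circle the gaps of all generations are disjoint subsets of $\T$, so your second difficulty disappears and no control of $P[\nu]$ is needed.
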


Here $\nu\le\mu$ denotes the order relation on positive measures.  Thus the
endpoint obstruction is hereditary under passage to nonzero submeasures.
Corollary~\ref{cor:logintro} gives a logarithmic strengthening of the support
condition.

We next turn to the elliptic problem.  For $m>3$, consider
\begin{equation}\label{eq:intro-semilinear}
\Delta u=(u_+)^m\qquad\text{in }\D,
\end{equation}
and let $U$ denote its maximal solution.
The Keller--Osserman theory and its refinements give
\[
U(z)\asymp(1-|z|)^{-2/(m-1)};
\]
see \cite{Kel57,Oss57,BM92,BM98}.  Following \cite{IN24}, a solution $u$ is
nearly maximal if
\[
\limsup_{r\to1^-}\int_{\T}(U-u)(r\xi)\,|d\xi|<\infty.
\]
The weak boundary trace of $U-u$ is its deficiency measure, and $\Def_m$
denotes the class of all such measures.  This is a finite measure trace after
the universal leading blow-up has been removed; compare the broader measure
framework in \cite{Ponce16}.

Set \(\alpha=(m-3)/(m-1)\).
Ivrii and Nicolau established
\begin{equation}\label{eq:intro-IN-inclusions}
\Def_m\subset\calM_\alpha(\T),
\qquad
\calM_\beta(\T)\subset\Def_m
\quad(\beta<\alpha),
\end{equation}
leaving the equality case open \cite[Theorem~1.3]{IN24}.

\begin{theorem}\label{thm:main-counterexample}
Let $m>3$ and set
\(\alpha=(m-3)/(m-1)\).
Then the inclusion
\(\Def_m\subset\calM_\alpha(\T)\)
is strict.  More precisely, there exist a closed null set
$E\subset\T$ and a nonatomic probability measure $\mu$
supported on $E$ such that
\[
\sum_{J\subset\T\setminus E}|J|^\alpha<\infty,
\]
but $\mu\notin\Def_m$.
\end{theorem}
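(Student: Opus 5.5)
We plan to prove Theorem~\ref{thm:main-counterexample} with an endpoint Cantor--Moran construction together with a necessary condition for membership in $\Def_m$. The necessary condition comes from a nonlinear energy estimate.

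\textbf{The obstruction.} We first isolate a quantitative condition that every deficiency measure must satisfy. Let $u$ be nearly maximal with deficiency $\mu$, and write $w=U-u\ge0$. Subtracting the two equations gives
\[
\Delta w=U^m-(u_+)^m\ge c\,U^{m-1}\min(w,U).
\]
Since $U^{m-1}\asymp(1-|z|)^{-2}$, this is a Schr\"odinger-type inequality $\Delta w\ge c\,w/(1-|z|)^2$ in the region where $w\lesssim U$. We then test it against Whitney-type regions over each arc. The consequence we aim for is the following: for any arc $I$ and any scale $h\le|I|$, the part of $\mu$ on $I$ cannot be too large compared with the cap
\[
|I|\,h^{-2/(m-1)},
\]
which is the largest deficiency $U$ can absorb at height $h$. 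Taking an extremal $w\approx\min(P[\mu],U)$ and summing the nonlinear energy $\int U^{m-1}\min(P\mu,U)\,dA$ over a Whitney decomposition, finiteness of the trace forces
\[
\sum_{\text{scales }k}\ \sum_{|I|=2^{-k}}\min\bigl(\mu(I),\,|I|^{1-2/(m-1)}\bigr)\cdot(\text{log-type weight})<\infty .
\]
Here $1-2/(m-1)=\alpha$. We will state this as a lemma: if $\mu\in\Def_m$, then
\[
\sum_{I\ \text{dyadic}}\mu(I)\,\mathbf 1\bigl\{\mu(I)\gtrsim|I|^{\alpha}\bigr\}<\infty
\]
or an equivalent cumulative-over-scales quantity is finite. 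This is exactly the accumulation that the support sum $\sum|J|^\alpha$ does not control.

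\textbf{The construction.} Next we build $E$ as a Moran set. At generation $k$ there are $N_k$ arcs of length $\ell_k$ with $N_k\ell_k^{\alpha}\approx 1/k^{1+\epsilon}$ summed appropriately, so that the gaps created at generation $k$, numbering about $N_{k}$ and of size about $\ell_{k-1}$, give $\sum_J|J|^\alpha<\infty$. The measure $\mu$ is the uniform Cantor measure, with $\mu(I)=1/N_k$ on each generation-$k$ arc. We choose the parameters so that
\[
\mu(I)\approx\ell_k^{\alpha}\,\log(1/\ell_k)^{\gamma}
\]
sits just above the critical threshold. The divergent part is then a harmonic-type series $\sum_k 1/k$ (logarithmic endpoint accumulation), while the support sum carries an extra factor $k^{-1-\epsilon}$ coming from the gap sizes being slightly smaller than the arc spacing. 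This balancing mirrors the Ivrii--Nicolau endpoint example for $q=q_c$ in the analytic setting. Nonatomicity is immediate since $N_k\to\infty$, and $E$ is null because $N_k\ell_k\to0$.

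\textbf{Main obstacle.} The hardest step is the lemma giving a genuinely necessary condition from nonlinearity. The linear Poisson kernel of $\mu$ does not by itself see the cutoff, and near-maximality only controls $L^1$ means on circles. We would try to handle this with a comparison-principle argument. Assuming $\mu\in\Def_m$, the function $w$ dominates the solution of $\Delta v=U^{m-1}\min(v,U)$ with boundary trace $\mu$ from below. On each Carleson box $Q(I)$ where $\mu(I)\gg|I|^{\alpha}$, the function $v$ saturates at $U$ on a fixed fraction of the top half of $Q(I)$. This forces a contribution of order $\mu(I)$ to $\int\Delta w$, and hence to the total mass bound $\int_{\D}(1-|z|)\Delta w<\infty$. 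Summing over the nested saturated boxes in the construction then yields the divergent series $\sum_k 1/k$, a contradiction. Ensuring that the contributions of different generations are disjoint, which we would arrange using a fixed hyperbolic separation of the Whitney layers, is where we expect the technical work to concentrate.
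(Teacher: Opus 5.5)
\textbf{There is a genuine gap.} Your overall architecture matches the paper's: a necessary energy condition plus an endpoint Moran measure. But the necessary condition you plan to extract is not correct, and with correct bookkeeping your construction gives no contradiction.

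\textbf{The proposed lemma and its bookkeeping.} The version of the lemma you write down is false. A unit point mass at $\zeta$ lies in $\calM_\beta(\T)\subset\Def_m$ for $\beta<\alpha$, since a single point is a $\beta$-Beurling--Carleson set. Yet every dyadic $I\ni\zeta$ has $\mu(I)=1\ge|I|^\alpha$, so $\sum_I\mu(I)\mathbf 1\{\mu(I)\gtrsim|I|^\alpha\}=\infty$.

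The underlying claim, that saturation on the top half of $Q(I)$ contributes ``order $\mu(I)$'', is also wrong. Saturation means $\calA(z,w)\asymp\delta(z)^{-\gamma-2}$ with $\gamma=2/(m-1)$. So a Whitney box of side $|I|$ contributes $\int\delta\,\calA\,dA\asymp|I|^{1-\gamma}=|I|^\alpha$, however large $\mu(I)$ is; this cap is the whole difficulty. Stacking all saturated heights above $I$ does give about $\mu(I)$, but those regions are shared by all nested heavy arcs, which is why the point mass has finite energy. Counting one saturated box per generation-$k$ arc therefore gives $\sum_k N_k\ell_k^\alpha\approx\sum_k k^{-1-\epsilon}<\infty$ in your parametrization, and the advertised $\sum_k 1/k$ has no source.

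\textbf{The comparison step.} It runs the wrong way. The inequality $\Delta w\ge cU^{m-1}\min(w,U)$ makes $w$ a subsolution of $\Delta v=cU^{m-1}\min(v,U)$, so comparison yields $w\le v$. That is an upper bound, not the lower bound you need. Moreover, existence of such a $v$ with trace $\mu$ is essentially the question being decided. Nor can you replace $w$ by $\min(P\mu,U)$: $w=P\mu-\calG\lambda$, and the Green potential may cancel $P\mu$ exactly where you need a lower bound.

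\textbf{What the paper does instead.} It supplies both missing ingredients.
\begin{itemize}
\item[(i)] After the Riesz decomposition $w=P\mu-\calG\lambda$ with $\int_\D\delta\,\calA(z,w)\,dA<\infty$, the relative Fatou (Fatou--Na\"\i m--Doob) theorem gives $\calG\lambda/P\mu\to0$ along the minimal fine filter at $\mu$-a.e.\ point.
\item[(ii)] A uniform lower bound for the reduction of $M_\zeta$ on Whitney balls shows that a minimally thin set cannot contain Whitney balls at all small scales. Hence there is $\Gamma$ with $\mu(\Gamma)>0$ such that, at every large generation $n$ and for at least $\mu(\Gamma)N_n$ arcs $I$, there are points $z_I$ at height $\asymp\ell_n$ with $w(z_I)\ge\frac12P\mu(z_I)\gtrsim D_n\ell_n^{-\gamma}$, where $D_n=\mu(I)/\ell_n^\alpha$.
\item[(iii)] The surplus $D_n$ is converted into lateral extent. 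The function $w$ solves $-\Delta w+\mathfrak q_w w=0$ with $0\le\mathfrak q_w\lesssim\ell_n^{-2}$ in the strip. A Harnack chain of $j\le k_n\le\kappa\log D_n$ steps, with $\kappa\log H<1$, keeps $w\gtrsim\ell_n^{-\gamma}$ on $k_n$ disjoint Whitney boxes placed in a private gap of length $100k_n\ell_n$.
\end{itemize}

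\textbf{Why this gives the endpoint separation.} A gap of length $k\ell_n$ saturated at height $\asymp\ell_n$ carries energy $k\ell_n^\alpha$, but only $(k\ell_n)^\alpha$ in the support sum. The energy is therefore at least a constant times $\sum_n N_nk_n\ell_n^\alpha=\sum_n a_nk_n=\infty$, while the gaps give $\sum_n a_nk_n^\alpha<\infty$. Here $a_n=1/(n(\log n)^{\mathsf B})$ with $1+\alpha<\mathsf B\le2$, and $k_n\asymp\log n$. Disjointness across generations, which you flagged as the technical core, is automatic because the boxes lie over private gaps that later generations never enter.

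Your plan lacks exactly this mechanism: gaps much longer than the arcs, saturated along their length by transporting the density surplus. Instead it uses gaps comparable to the arc spacing.
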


The measure is carried by one $\alpha$-Beurling--Carleson set; thus the
obstruction lies in its mass distribution, not in the countable-union clause in
$\calM_\alpha(\T)$.

\subsection*{Ideas of the proofs and organization}
For Theorem~\ref{thm:main}, the boundary formula for $S_\nu'$ reduces the
problem to a positive kernel.  A binary construction with cylinder length
$L_n$ is chosen so that
\[
\sum_n2^nL_n^\theta<\infty,
\qquad
2^{n(1-s)}L_n^{1-2s}\asymp\frac1{n+1}.
\]
The first relation gives the critical support condition and the second gives a
harmonic-series lower bound for every $0<\nu\le\mu$.

For Theorem~\ref{thm:main-counterexample}, writing $v=U-u$ gives a
Poisson--Green representation and the necessary estimate
\[
v=P\mu-\calG\lambda,
\qquad
\int_{\D}\delta(z)\calA(z,v(z))\,dA(z)<\infty.
\]
Relative Fatou theory and minimal thinness
\cite{Doo59,Doob84,AG01,Bur86,GM05} locate Whitney-scale points where the
Poisson term dominates.  A uniform reduction estimate and Harnack chains
\cite{GT01} propagate this lower bound to disjoint boxes.  The endpoint Moran
parameters satisfy
\[
\sum_n a_nk_n^\alpha<\infty,
\qquad
\sum_n a_nk_n=\infty,
\]
which separates the support condition from the nonlinear energy condition.

Sections~\ref{sec:boundary}--\ref{sec:log} prove the analytic result and its
logarithmic refinement.  Sections~\ref{sec:preliminaries}--
\ref{sec:proof-counterexample} establish the elliptic counterexample.

\subsection*{Notation and conventions}
We identify \(\T\) with \(\R/\mathbb Z\) by
\(x\mapsto e^{2\pi i x}\).  Arc lengths and the measure \(dx\) on
\(\R/\mathbb Z\) are normalized; under this parametrization, \(|d\xi|=2\pi\,dx\).
In the PDE part we use the trace
normalization of \cite{IN24}, with the unnormalized boundary element
\(|d\xi|\) and the factor \(1/(2\pi)\) in the Poisson integral.  The symbol
\(dA\) denotes planar Lebesgue area measure in \(\D\), and
\(t_+=\max\{t,0\}\).  We write \(A\lesssim B\) if \(A\le CB\), with \(C\)
independent of the scale parameters under consideration, and
\(A\asymp B\) if \(A\lesssim B\lesssim A\).

\section{A boundary formula}\label{sec:boundary}
In Sections~\ref{sec:boundary}--\ref{sec:log} we fix
\(0<s<\tfrac12\), and set
\(q_c=s/(1-s)\) and
\(\theta=1-q_c=(1-2s)/(1-s)\).
For a finite positive singular measure \(\sigma\) on \(\T\), put
\[
S_\sigma(z)=\exp\!\left(-\int_{\T}
\frac{\ee{x}+z}{\ee{x}-z}\,d\sigma(x)\right),
\qquad z\in\D.
\]
We shall repeatedly use
\begin{equation}\label{eq:elementary-identities}
\frac{\theta}{1-2s}=\frac{1}{1-s},
\qquad
\frac{1-2s}{\theta}=1-s.
\end{equation}

\begin{lemma}[Ahern--Clark formula]\label{lem:boundary}
Let \(\sigma\) be a finite positive singular measure supported on a closed
null set \(F\subset\T\).  Then, for every
\(x\in\T\setminus F\),
\[
|S_\sigma'(\ee{x})|
=2\int_F\frac{d\sigma(y)}{|\ee{y}-\ee{x}|^2}.
\]
If the function on the right does not belong to
\(L^s(\T\setminus F)\), then \(S_\sigma'\notin H^s\).
\end{lemma}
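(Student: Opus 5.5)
The plan is to compute $S_\sigma'$ directly by differentiating the exponential representation, using that $S_\sigma$ extends analytically across each complementary arc of $F$. Write
\[
H(z)=\int_F\frac{\ee{y}+z}{\ee{y}-z}\,d\sigma(y),
\]
so that $S_\sigma=e^{-H}$. Since $F$ is closed, the function $H$ is holomorphic on $\mathbb C\setminus\{\ee{y}:y\in F\}$, and hence so is $S_\sigma$. Differentiating under the integral (justified by uniform bounds on compacts avoiding $F$) gives
\[
S_\sigma'(z)=-S_\sigma(z)\,H'(z),
\qquad
H'(z)=\int_F\frac{2\ee{y}}{(\ee{y}-z)^2}\,d\sigma(y).
\]

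Next take $z=\ee{x}$ with $x\notin F$. For unimodular $\xi\ne z$, the quantity $\frac{\xi+z}{\xi-z}$ is purely imaginary, so $\operatorname{Re}H(\ee{x})=0$ and $|S_\sigma(\ee{x})|=1$. It therefore remains to show that
\[
|H'(\ee{x})|=2\int_F|\ee{y}-\ee{x}|^{-2}\,d\sigma(y).
\]
The key identity is that for $|\xi|=|z|=1$,
\[
\frac{\xi z}{(\xi-z)^2}=-\frac{1}{|\xi-z|^2},
\]
because $(\xi-z)^2/(\xi z)=\xi/z+z/\xi-2=2\cos\phi-2=-|\xi-z|^2$. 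Hence
\[
zH'(z)=-2\int_F|\ee{y}-z|^{-2}\,d\sigma(y).
\]
This is a real number of constant sign, so taking the modulus gives exactly the claimed formula with no cancellation, and $|z|=1$ removes the factor $z$.

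For the second assertion, suppose $S_\sigma'\in H^s$. Then $S_\sigma'$ has nontangential boundary values $f^*$ almost everywhere, with $f^*\in L^s(\T)$. On the open set $\T\setminus F$, the function $S_\sigma'$ is analytic across the circle, so its radial limit coincides with the value computed above. Since $F$ is null, the right-hand side would then lie in $L^s(\T\setminus F)$, which contradicts the hypothesis.

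The only delicate point is this identification of the $H^s$ boundary function with the pointwise derivative on $\T\setminus F$. It holds because the radial limit of a function analytic in a neighborhood of a boundary point is just its value there, so I expect no real obstacle.
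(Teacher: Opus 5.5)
Your proposal is correct and follows the paper's argument. The second assertion is handled exactly as in the paper: the $H^s$ boundary function lies in $L^s(\T)$, and on $\T\setminus F$ it coincides with the analytic continuation of $S_\sigma'$. The only difference is that the paper cites the Ahern--Clark formula \cite{AC74}, whereas you verify it directly by differentiating $S_\sigma=e^{-H}$ and using $\xi z/(\xi-z)^2=-|\xi-z|^{-2}$ for $\xi,z\in\T$; this is valid here because $\sigma$ is carried by the closed set $F$ and $|S_\sigma|=1$ on $\T\setminus F$.
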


\begin{proof}
The identity is the Ahern--Clark formula in the singular-inner-function case;
see \cite{AC74}, \cite[Section~4.1]{Mas13}, and \cite[p.~2597]{IN24}.  If
\(S_\sigma'\in H^s\), its radial boundary function belongs to
\(L^s(\T)\) by the standard boundary theorem
\cite[Chapter~2, \S2.1]{Dur70}.  Since \(F\) is null, the asserted
contrapositive follows.
\end{proof}

\section{The endpoint construction}\label{sec:endpoint}
We use a binary construction modeled on
\cite[Lemmas~5.4--5.5]{IN24}, with endpoint scales chosen so that every
nonzero submeasure $0<\nu\le\mu$ inherits the kernel divergence.

Choose a closed arc $I_0\subset\T$ of length $L_0\in(0,\tfrac14)$, and fix a compact
lift of $I_0$ to an interval of $\R$. Define
\begin{equation}\label{eq:Ln-def}
L_n=L_0\,2^{-n/\theta}(n+1)^{-1/(1-2s)},
\qquad n\ge 0.
\end{equation}
Since $0<\theta<1$,
\[
\frac{L_{n+1}}{L_n}=2^{-1/\theta}\Bigl(\frac{n+1}{n+2}\Bigr)^{1/(1-2s)}
<2^{-1/\theta}<\frac12,
\qquad n\ge0.
\]

Starting from $I_0$, we perform the usual binary Cantor construction. At stage $n$,
each basic interval has length $L_n$; from it we keep the two closed endpoint
subintervals of length $L_{n+1}$ and delete the open middle interval. Let $E_n$ be
the union of the $2^n$ remaining intervals at stage $n$, and set
\[
E=\bigcap_{n\ge0} E_n.
\]
For each $n\ge0$ we enumerate the generation-$n$ basic intervals as
\(I_{n,1},\dots,I_{n,2^n}\),
and we write $\Gap_{n+1,k}$ for the open middle interval removed from $I_{n,k}$ at the
next step. Thus \(|I_{n,k}|=L_n\) and
\(|\Gap_{n+1,k}|=g_{n+1}:=L_n-2L_{n+1}\).
Since $L_{n+1}/L_n\le 2^{-1/\theta}$, there exists a constant
\(c_{\mathrm{gap}}=c_{\mathrm{gap}}(s)>0\) such
that
\begin{equation}\label{eq:gap-comp}
c_{\mathrm{gap}}L_n\le g_{n+1}\le L_n,
\qquad n\ge0.
\end{equation}

\begin{proposition}\label{prop:set}
The set $E$ is closed, has Lebesgue measure zero, satisfies
\[
\sum_J|J|^\theta<\infty,
\]
and supports a nonatomic singular probability measure $\mu$ such that
\[
\mu(I_{n,k})=2^{-n}
\qquad(n\ge0,\ 1\le k\le2^n).
\]
\end{proposition}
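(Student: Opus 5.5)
Closedness and nullity come first. Each $E_n$ is a finite union of closed intervals, so $E=\bigcap_n E_n$ is closed. Moreover $|E|\le|E_n|=2^nL_n=L_0\,2^{n(1-1/\theta)}(n+1)^{-1/(1-2s)}\to0$, because $\theta<1$. Here we view $E\subset I_0$ as a subset of $\T$ through the fixed lift; since $L_0<\tfrac14$, arcs and intervals in $I_0$ correspond isometrically.

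Next come the complementary arcs. The complement $\T\setminus E$ consists of the single arc $\T\setminus I_0$ together with the gaps $\Gap_{n+1,k}$, $n\ge0$, $1\le k\le 2^n$. To see that these exhaust the complement, note that a point of $I_0\setminus E$ lies outside some $E_{n+1}$ but inside $E_n$, hence in a removed middle interval. The gaps are pairwise disjoint open intervals, and their endpoints lie in $E$ because the endpoints of every basic interval survive all stages. So they are exactly the components. By \eqref{eq:gap-comp},
\[
\sum_J|J|^\theta\le 1+\sum_{n\ge0}2^n L_n^\theta
=1+L_0^\theta\sum_{n\ge0}(n+1)^{-\theta/(1-2s)}
=1+L_0^\theta\sum_{n\ge0}(n+1)^{-1/(1-s)},
\]
using \eqref{eq:elementary-identities}. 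This converges since $1/(1-s)>1$. This computation is the heart of the endpoint choice \eqref{eq:Ln-def}, and it is the only step that needs any care: the identity $\theta/(1-2s)=1/(1-s)$ is exactly what makes the series summable at the critical exponent.

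Finally, the measure. Let $\mu_n=2^{-n}L_n^{-1}\,dx|_{E_n}$, a probability measure giving mass $2^{-n}$ to each $I_{n,k}$. For $j\ge n$, $\mu_j(I_{n,k})=2^{-n}$, since $I_{n,k}$ contains exactly $2^{j-n}$ generation-$j$ intervals. I would then take a weak-$*$ limit point $\mu$ of $(\mu_n)$ and check that it is supported on $E$, which holds because $\mu_j(E_n)=1$ for $j\ge n$ and $E_n$ is closed. To get $\mu(I_{n,k})=2^{-n}$ exactly, I would use the positive separation of the generation-$n$ intervals (by gaps of length at least $c_{\mathrm{gap}}L_{n'}$ for $n'<n$). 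Choose disjoint open neighbourhoods $V_k\supset I_{n,k}$ with $\overline{V_k}\cap E_n=I_{n,k}$. The portmanteau inequalities on $V_k$ and on $\overline{V_k}$ then pin down $\mu(I_{n,k})=\mu(\overline{V_k})=2^{-n}$. This also shows the limit is unique, so the whole sequence converges.

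For nonatomicity, any point $x$ lies in at most one generation-$n$ interval, so $\mu(\{x\})\le 2^{-n}\to0$. Singularity holds because $\mu$ is concentrated on the Lebesgue-null set $E$. Alternatively, $\mu$ can be defined directly as the pushforward of the fair coin measure on $\{0,1\}^{\mathbb N}$ under the address map, which is continuous and onto $E$; this gives the same conclusions immediately.
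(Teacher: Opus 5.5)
Your proof is correct and follows the paper's argument. You use the same computation $2^nL_n=L_0\,2^{n(1-1/\theta)}(n+1)^{-1/(1-2s)}\to0$ for nullity, the same identification of the complementary arcs (the outer arc plus the middle gaps, with $g_{n+1}\le L_n$ and $\theta/(1-2s)=1/(1-s)$ giving summability), and the same Bernoulli cylinder measure. The only difference is that you spell out why that measure exists, via a weak-$*$ limit with a portmanteau check or as a pushforward of the coin-flip measure, whereas the paper simply takes it as standard.
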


\begin{proof}
The set \(E\) is compact and
\[
|E_n|=2^nL_n
=L_0\,2^{n(1-1/\theta)}(n+1)^{-1/(1-2s)}\longrightarrow0,
\]
so \(|E|=0\).  Its complementary arcs are the outer arc
\(\T\setminus I_0\) and the deleted middle intervals.  Hence, by
\eqref{eq:gap-comp} and \eqref{eq:elementary-identities},
\[
\sum_J|J|^\theta
\le (1-L_0)^\theta+C\sum_{n\ge0}2^nL_n^\theta
=(1-L_0)^\theta+CL_0^\theta
\sum_{n\ge0}(n+1)^{-1/(1-s)}<\infty.
\]

Assign mass $2^{-n}$ to every generation-$n$ cylinder.  These masses are
consistent and define the usual Bernoulli probability measure $\mu$ on $E$.
Since the largest cylinder mass is $2^{-n}\to0$, the measure is nonatomic;
because $|E|=0$, it is singular.
\end{proof}

\begin{proposition}\label{prop:kernel}
Let $\nu$ be a finite positive Borel measure on $\T$ satisfying $0<\nu\le\mu$, and
set
\[
K_\nu(x)=\int_E \frac{d\nu(y)}{|\ee{x}-\ee{y}|^2},
\qquad x\in\T\setminus E.
\]
Then
\[
\int_{\T\setminus E} K_\nu(x)^s\,dx=\infty.
\]
\end{proposition}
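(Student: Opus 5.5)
The plan is to bound the integral from below gap by gap, using only the mass of $\nu$ sitting in the parent cylinder of each gap. The key input is the pointwise domination $\nu\le\mu$, which caps cylinder masses at $2^{-n}$. That cap will convert a sum of $s$-th powers of masses into a multiple of the total mass $\nu(\T)$. The endpoint choice \eqref{eq:Ln-def} is then exactly what makes the resulting series harmonic.

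\emph{Step 1 (pointwise lower bound on a gap).} Since $0\le\nu\le\mu$ and $\mu$ is concentrated on $E$, the measure $\nu$ is concentrated on $E$. Fix $n\ge0$ and $k$, and let $x\in\Gap_{n+1,k}$. Every $y\in I_{n,k}$ lies in the same lifted interval of length $L_n<\tfrac14$. Hence $|\ee{x}-\ee{y}|\le 2\pi|x-y|\le 2\pi L_n$, and therefore
\[
K_\nu(x)\ \ge\ \int_{I_{n,k}}\frac{d\nu(y)}{|\ee{x}-\ee{y}|^2}\ \ge\ \frac{\nu(I_{n,k})}{4\pi^2L_n^2}.
\]
Integrating over the gap and using \eqref{eq:gap-comp} gives
\[
\int_{\Gap_{n+1,k}}K_\nu^s\,dx\ \ge\ c\,L_n^{1-2s}\,\nu(I_{n,k})^s,
\]
with $c=c(s)>0$.

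\emph{Step 2 (summing over $k$ via the mass cap).} Since $\nu(I_{n,k})\le\mu(I_{n,k})=2^{-n}$ and $s-1<0$, whenever $\nu(I_{n,k})>0$ we have
\[
\nu(I_{n,k})^s=\nu(I_{n,k})\,\nu(I_{n,k})^{s-1}\ \ge\ 2^{n(1-s)}\nu(I_{n,k}).
\]
The inequality is trivial when the mass is zero. The intervals $I_{n,1},\dots,I_{n,2^n}$ are pairwise disjoint closed intervals covering $E$, so $\sum_k\nu(I_{n,k})=\nu(\T)$. Hence
\[
\sum_{k=1}^{2^n}\int_{\Gap_{n+1,k}}K_\nu^s\,dx\ \ge\ c\,\nu(\T)\,2^{n(1-s)}L_n^{1-2s}.
\]

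\emph{Step 3 (the endpoint computation).} By \eqref{eq:Ln-def} and \eqref{eq:elementary-identities},
\[
L_n^{1-2s}=L_0^{1-2s}\,2^{-n(1-2s)/\theta}(n+1)^{-1}=L_0^{1-2s}\,2^{-n(1-s)}(n+1)^{-1},
\]
so the $n$-th sum above is at least $c\,L_0^{1-2s}\nu(\T)/(n+1)$. All gaps $\Gap_{n+1,k}$, over all $n$ and $k$, are pairwise disjoint subsets of $\T\setminus E$. Summing over $n$ therefore gives
\[
\int_{\T\setminus E}K_\nu^s\,dx\ \ge\ c\,L_0^{1-2s}\,\nu(\T)\sum_{n\ge0}\frac1{n+1}=\infty,
\]
because $\nu(\T)>0$.

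There is no serious obstacle in this argument. The one point needing care is Step 2: the reverse-H\"older-type inequality must be applied cylinder by cylinder, so that no lower bound on individual masses of $\nu$ is required. This is exactly where the hereditary nature comes from, since only $\nu\le\mu$ and $\nu(\T)>0$ are used. Combined with Lemma~\ref{lem:boundary} and Proposition~\ref{prop:set}, this gives Theorem~\ref{thm:main}.
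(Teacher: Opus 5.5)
Your proposal is correct and follows the paper's proof essentially step for step. You bound $K_\nu$ below on each gap $\Gap_{n+1,k}$ using only the parent cylinder, then use the cap $\nu(I_{n,k})\le 2^{-n}$ to get $\nu(I_{n,k})^s\ge 2^{n(1-s)}\nu(I_{n,k})$, and finally the endpoint identity $2^{n(1-s)}L_n^{1-2s}=L_0^{1-2s}/(n+1)$ produces the harmonic series.
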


\begin{proof}
Since $\nu\le\mu$ and $\mu(\T\setminus E)=0$, we have $\nu(\T\setminus E)=0$. Hence
$\nu$ is supported on $E$, and \(\nu(E)=\nu(\T)>0\).

Fix $n\ge0$ and $1\le k\le 2^n$. If $x\in \Gap_{n+1,k}$ and $y\in E\cap I_{n,k}$, then
$x$ and $y$ lie in the same lifted copy of $I_{n,k}$, so
\[
|x-y|\le |I_{n,k}|=L_n.
\]
Since $L_0<\tfrac14$, we also have $|x-y|<\tfrac12$, and therefore
\[
|\ee{x}-\ee{y}|=2|\sin \pi(x-y)|\le 2\pi |x-y|\le 2\pi L_n.
\]
It follows that, for every $x\in \Gap_{n+1,k}$,
\[
K_\nu(x)
\ge \int_{E\cap I_{n,k}}\frac{d\nu(y)}{|\ee{x}-\ee{y}|^2}
\ge \frac{\nu(I_{n,k})}{(2\pi L_n)^2}.
\]
Using also \eqref{eq:gap-comp}, we obtain
\begin{align}
\int_{\Gap_{n+1,k}} K_\nu(x)^s\,dx
&\ge g_{n+1}\left(\frac{\nu(I_{n,k})}{(2\pi L_n)^2}\right)^s \notag\\
&\ge c\,\nu(I_{n,k})^sL_n^{1-2s},
\label{eq:one-gap}
\end{align}
where $c>0$ depends only on $s$.

Summing \eqref{eq:one-gap} over $k$ yields
\begin{equation}\label{eq:level}
\int_{\bigcup_{k=1}^{2^n} \Gap_{n+1,k}} K_\nu(x)^s\,dx
\ge cL_n^{1-2s}\sum_{k=1}^{2^n}\nu(I_{n,k})^s.
\end{equation}
Now $\nu(I_{n,k})\le\mu(I_{n,k})=2^{-n}$ for every $k$. Since $0<s<1$, for
$0\le a\le A$ one has $a^s\ge A^{s-1}a$. Applying this with $A=2^{-n}$ gives
\[
\nu(I_{n,k})^s\ge 2^{n(1-s)}\nu(I_{n,k}).
\]
Because $\nu$ is supported on $E\subset E_n=\bigcup_{k=1}^{2^n} I_{n,k}$, we have
\[
\sum_{k=1}^{2^n}\nu(I_{n,k})=\nu(E_n)=\nu(E)=\nu(\T).
\]
Hence
\begin{equation}\label{eq:lp-lower}
\sum_{k=1}^{2^n}\nu(I_{n,k})^s\ge \nu(\T)\,2^{n(1-s)}.
\end{equation}
Combining \eqref{eq:level} and \eqref{eq:lp-lower}, we obtain
\[
\int_{\bigcup_{k=1}^{2^n} \Gap_{n+1,k}} K_\nu(x)^s\,dx
\ge c\,\nu(\T)\,2^{n(1-s)}L_n^{1-2s}.
\]
Finally, by \eqref{eq:Ln-def} and \eqref{eq:elementary-identities},
\[
2^{n(1-s)}L_n^{1-2s}
=L_0^{1-2s}
2^{n(1-s)-n(1-2s)/\theta}
(n+1)^{-1}
=\frac{L_0^{1-2s}}{n+1}.
\]
Therefore
\[
\int_{\bigcup_{k=1}^{2^n} \Gap_{n+1,k}} K_\nu(x)^s\,dx
\ge \frac{c\,\nu(\T)L_0^{1-2s}}{n+1}.
\]
The sets $\bigcup_{k=1}^{2^n} \Gap_{n+1,k}$ are pairwise disjoint as $n$ varies, so
summing over $n$ gives
\[
\int_{\T\setminus E} K_\nu(x)^s\,dx
\ge c\,\nu(\T)L_0^{1-2s}\sum_{n=0}^{\infty}\frac{1}{n+1}=\infty.
\]
This proves the proposition.
\end{proof}

\begin{proof}[Proof of Theorem~\ref{thm:main}]
Apply the construction above.  If $0<\nu\le\mu$, then $\nu$ is a
nonzero singular measure supported on $E$.  By Proposition~\ref{prop:kernel} and
Lemma~\ref{lem:boundary},
\[
|S_\nu'(\ee{x})|=2K_\nu(x),
\qquad x\in\T\setminus E.
\]
Since \(K_\nu\notin L^s(\T\setminus E)\), it follows that
\(S_\nu'\notin H^s\).  The remaining assertions follow from
Proposition~\ref{prop:set}.
\end{proof}

\begin{remark}
In the notation of \cite{IN24}, Theorem~\ref{thm:main} is the endpoint
analogue of the hereditary counterexample from \cite[Lemma~5.5]{IN24}.
Ivrii and Nicolau already gave an endpoint non-hereditary example in
\cite[Lemma~5.4]{IN24}; the point here is that the stronger hereditary
failure $S_\nu'\notin H^s$ for every nonzero $0<\nu\le\mu$ still occurs at the
critical exponent $q_c=s/(1-s)$.
\end{remark}

\section{A logarithmic refinement}\label{sec:log}	
The same construction yields the following logarithmic refinement.
\begin{corollary}\label{cor:logintro}
Let $0<s<\tfrac12$, and let $q_c$ and $\theta$ be as above. Then there exist a
closed set $E\subset\T$ of Lebesgue measure zero and a nonatomic singular
probability measure $\mu$ supported on $E$ such that
\[
\sum_J |J|^{\theta}\Bigl(\log\frac{e}{|J|}\Bigr)^{q_c}<\infty,
\]
where the sum is taken over all complementary arcs \(J\) of \(E\), and for
every nonzero finite positive Borel measure $\nu$ with $0<\nu\le\mu$,
\[
S_\nu'\notin H^s.
\]
\end{corollary}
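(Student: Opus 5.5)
The plan is to rerun the construction of Section~\ref{sec:endpoint} with an extra iterated-logarithm factor in the scales. The unmodified scales \eqref{eq:Ln-def} do \emph{not} work here. Indeed, $\log(e/L_n)\asymp n$ for $n\ge1$, so
\[
2^nL_n^\theta\Bigl(\log\tfrac{e}{L_n}\Bigr)^{q_c}\asymp (n+1)^{-1/(1-s)+s/(1-s)}=(n+1)^{-1},
\]
and the logarithmic support sum diverges. There is, however, room to spare. The support condition involves the exponent $\theta$, while the kernel estimate involves the exponent $1-2s$, and $\theta=(1-2s)/(1-s)>1-2s$. Thus an extra factor can make the first sum converge while the second still diverges.

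Concretely, I would define
\[
L_n=L_0\,2^{-n/\theta}(n+1)^{-1/(1-2s)}\bigl(\log(n+2)\bigr)^{-\beta},
\qquad \beta=\frac1{1-2s}.
\]
Since $\log(n+2)$ increases in $n$, the ratio satisfies $L_{n+1}/L_n\le 2^{-1/\theta}<\tfrac12$. Hence the binary Cantor construction, the gap comparability \eqref{eq:gap-comp}, and the Bernoulli measure $\mu$ with $\mu(I_{n,k})=2^{-n}$ go through verbatim. The conclusions that $E$ is closed and null and that $\mu$ is nonatomic and singular follow as in Proposition~\ref{prop:set}.

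Next comes the support sum. The complementary arcs are $\T\setminus I_0$ together with the $2^n$ gaps of length $g_{n+1}\asymp L_n$. Because $t\mapsto t^\theta(\log(e/t))^{q_c}$ is increasing for small $t$ and doubling-comparable, the gap sum is bounded by a constant times
\[
\sum_n 2^nL_n^\theta(\log\tfrac e{L_n})^{q_c}.
\]
Using $\log(e/L_n)\lesssim n+1$, each term is
\[
\lesssim (n+1)^{-\theta/(1-2s)+q_c}(\log(n+2))^{-\beta\theta}
=(n+1)^{-1}(\log(n+2))^{-1/(1-s)},
\]
by \eqref{eq:elementary-identities} and $q_c=s/(1-s)$. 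Since $1/(1-s)>1$, this series converges by Cauchy condensation.

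For the hereditary failure, I would repeat the proof of Proposition~\ref{prop:kernel} word for word up to the bound
\[
\int_{\bigcup_k\Gap_{n+1,k}}K_\nu^s\,dx\ge c\,\nu(\T)\,2^{n(1-s)}L_n^{1-2s}.
\]
With the new scales the right side equals
\[
c\,\nu(\T)L_0^{1-2s}\big/\bigl((n+1)\log(n+2)\bigr).
\]
Summing over $n$ still diverges, since $\sum 1/\bigl((n+1)\log(n+2)\bigr)=\infty$. Then Lemma~\ref{lem:boundary} gives $S_\nu'\notin H^s$ for every $0<\nu\le\mu$.

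The only delicate point is the choice of $\beta$. We need $\beta\theta>1$ for the support sum and $\beta(1-2s)\le1$ for divergence, and both hold because $\theta>1-2s$. A minor bookkeeping issue is the monotonicity of the log gauge near $|J|\approx L_0$, which a fixed constant absorbs.
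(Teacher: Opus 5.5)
Your proposal is correct and takes essentially the same approach as the paper. The paper uses scales $\widetilde L_n=L_0\,2^{-n/\theta}(A_n/A_0)^{1/(1-2s)}$ with $A_n=1/((n+2)\log(n+2))$, which agree with your $L_n$ up to constants, and then runs the same two computations: the support sum is bounded by $\sum_n (n+2)^{-1}(\log(n+2))^{-1/(1-s)}<\infty$, and the kernel lower bound reduces to $\sum_n A_n=\infty$.
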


\begin{proof}[Proof of Corollary~\ref{cor:logintro}]
Define
\[
A_n=\frac1{(n+2)\log(n+2)},
\qquad
\widetilde L_n
=L_0\,2^{-n/\theta}\left(\frac{A_n}{A_0}\right)^{1/(1-2s)}.
\]
Since \(A_n\downarrow0\),
\[
\frac{\widetilde L_{n+1}}{\widetilde L_n}
=2^{-1/\theta}\left(\frac{A_{n+1}}{A_n}\right)^{1/(1-2s)}
<\frac12,
\]
so the binary construction of Section~\ref{sec:endpoint} applies verbatim.
Let \(\widetilde E\) and \(\widetilde\mu\) be the resulting set and Bernoulli
measure. For \(0<\nu\le\widetilde\mu\), set
\[
\widetilde K_\nu(x)
=
\int_{\widetilde E}
\frac{d\nu(y)}{|\ee{x}-\ee{y}|^2},
\qquad x\in\T\setminus\widetilde E.
\]  Repeating the proof of Proposition~\ref{prop:kernel}, with
\(L_n\) replaced by \(\widetilde L_n\), gives, for every nonzero
\(0<\nu\le\widetilde\mu\),
\[
\int_{\T\setminus\widetilde E}\widetilde K_\nu(x)^s\,dx
\gtrsim \nu(\T)
\sum_{n\ge0}2^{n(1-s)}\widetilde L_n^{1-2s}
\asymp \nu(\T)\sum_{n\ge0}A_n=\infty.
\]
Thus \(S_\nu'\notin H^s\) by Lemma~\ref{lem:boundary}.

Since
\(\widetilde L_n-2\widetilde L_{n+1}\asymp\widetilde L_n\),
the generation-\((n+1)\) deleted gaps have length comparable to
\(\widetilde L_n\). Therefore
\[
\sum_J |J|^\theta\left(\log\frac e{|J|}\right)^{q_c}
\lesssim 1+
\sum_{n\ge0}2^n\widetilde L_n^\theta
\left(\log\frac e{\widetilde L_n}\right)^{q_c}.
\]
Now
\[
2^n\widetilde L_n^\theta
=L_0^\theta\left(\frac{A_n}{A_0}\right)^{1/(1-s)},
\qquad
\log\frac e{\widetilde L_n}\lesssim n+1.
\]
Since \(q_c=s/(1-s)\), the last summand is bounded by a constant multiple of
\[
\frac1{(n+2)(\log(n+2))^{1/(1-s)}},
\]
whose series converges.  This proves the corollary.
\end{proof}

\begin{remark}
Corollary~\ref{cor:logintro} is intended only as a strengthening of the endpoint
example within the same binary Cantor scheme.  No optimality claim for the
logarithmic exponent is made here.
\end{remark}

\section{Preliminaries and a necessary energy condition}
\label{sec:preliminaries}

In Sections~\ref{sec:preliminaries}--\ref{sec:proof-counterexample} we
prove Theorem~\ref{thm:main-counterexample}.  The notation is reset here: the
analytic parameters \(s,q_c,\theta\) are not used below, and all sets, measures,
and scale sequences introduced from this point onward are independent of the
objects in Sections~\ref{sec:boundary}--\ref{sec:log}.  Fix \(m>3\), and allow constants
to depend on \(m\) and on fixed geometric parameters, but never on the scale
index.  The symbols \(c,C>0\) may change from line to line; all comparability
constants are uniform in the generation, the cylinder arc, and the selected
Whitney box.

Let \(U=u_{\max}\) be the maximal solution of
\eqref{eq:intro-semilinear}.
Its existence and maximality follow from the Keller--Osserman theory; see
\cite[Lemma~8.3]{IN24} and \cite{Kel57,Oss57}.  Since \(U>0\), it solves
\(\Delta U=U^m\).  The power nonlinearity satisfies the hypotheses of
Bandle--Marcus, whose boundary blow-up results give the asymptotic and
uniqueness used below; see \cite[Theorems~2.3--2.4]{BM92}.  In the notation of
the present equation, the same asymptotic is recorded in
\cite[Section~9, p.~2610]{IN24}.

Following \cite[Section~8B]{IN24}, a solution \(u\) is nearly maximal if
\[
\limsup_{r\to1^-}\int_{\T}(U-u)(r\xi)\,|d\xi|<\infty.
\]
By maximality, \(U-u\ge0\); moreover, it is subharmonic, and the measures
\((U-u)(r\xi)|d\xi|\) have a weak-* limit \(\mu[u]\), called the deficiency
measure.  The class of all such measures is denoted by \(\Def_m\).
The published theorem \cite[Theorem~1.3(i)]{IN24} states, for
$
\alpha=\frac{m-3}{m-1},
$
that
\[
\Def_m\subset\calM_\alpha(\T),
\qquad
\calM_\beta(\T)\subset\Def_m\quad(\beta<\alpha).
\]

Set
\begin{equation}\label{eq:alpha-gamma}
\gamma=\frac2{m-1},
\qquad
\alpha=1-\gamma=\frac{m-3}{m-1}.
\end{equation}
Then
\begin{equation}\label{eq:gamma-m}
\gamma m=\gamma+2.
\end{equation}
Write \(\delta(z)=1-|z|\).  The cited boundary asymptotic gives
\begin{equation}\label{eq:U-asymptotic}
U(z)\asymp\delta(z)^{-\gamma}
\qquad(z\to\T).
\end{equation}

For \(z\in\D\) and \(\xi\in\T\), set
\[
P(z,\xi)=\frac{1-|z|^2}{|z-\xi|^2}.
\]
For a finite positive measure \(\mu\) on \(\T\), define
\[
P\mu(z)=\frac1{2\pi}\int_{\T}P(z,\xi)\,d\mu(\xi).
\]
This normalization matches the use of \(|d\xi|\) in the weak boundary trace.
Let
\[
G(z,w)=\log\left|\frac{1-\bar w z}{z-w}\right|
\]
be the Green kernel of the disk.  For a positive Radon measure \(\lambda\) in
\(\D\), put
\[
\calG\lambda(z)=\frac1{2\pi}\int_{\D}G(z,w)\,d\lambda(w).
\]
Thus \(\Delta_zG(z,w)=-2\pi\delta_w\) and
\(\Delta\calG\lambda=-\lambda\).

Every solution satisfies \(u\le U\).  Put \(v=U-u\ge0\).  Then
\begin{equation}\label{eq:defect-equation}
\Delta v
=U^m-(U-v)_+^m
=:\calA(z,v),
\end{equation}
where the nonlinear defect \(\calA\) is defined by
\begin{equation}\label{eq:defect-definition}
\calA(z,t)=U(z)^m-(U(z)-t)_+^m,
\qquad t\ge0.
\end{equation}
Equivalently,
\begin{equation}\label{eq:linear-sign}
-\Delta v+\calA(z,v)=0.
\end{equation}

\begin{lemma}\label{lem:defect-estimates}
There are constants $c,C>0$ such that, whenever $z$ is sufficiently close to
$\T$ and $t\ge0$,
\begin{equation}
c\,\delta(z)^{-\gamma-2}\min\{t\delta(z)^\gamma,1\}
\le \calA(z,t)
\le C\,\delta(z)^{-\gamma-2}\min\{t\delta(z)^\gamma,1\}.
\label{eq:defect-two-sided}
\end{equation}
\end{lemma}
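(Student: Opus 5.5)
The plan is to reduce everything to an elementary estimate for the scalar function \(f(t)=U^m-(U-t)_+^m\) with \(U=U(z)\) fixed, and then insert the boundary asymptotic \eqref{eq:U-asymptotic} together with \eqref{eq:gamma-m}. Write \(U=U(z)\) and \(\delta=\delta(z)\). By \eqref{eq:U-asymptotic}, for \(z\) close enough to \(\T\) there are constants \(0<a\le b\) with
\[
a\delta^{-\gamma}\le U\le b\delta^{-\gamma}.
\]
Consequently \(U^m\asymp\delta^{-\gamma m}=\delta^{-\gamma-2}\) by \eqref{eq:gamma-m}. It also gives \(\min\{t/U,1\}\asymp\min\{t\delta^\gamma,1\}\), with constants depending only on \(a,b\): indeed \(\min\{\lambda x,1\}\) lies between \(\min\{\lambda,1\}\min\{x,1\}\) and \(\max\{\lambda,1\}\min\{x,1\}\). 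Hence it suffices to prove, for every \(U>0\) and \(t\ge0\),
\[
c_m\,U^m\min\{t/U,1\}\le U^m-(U-t)_+^m\le C_m\,U^m\min\{t/U,1\}.
\]

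Set \(\tau=t/U\). The difference then equals \(U^m h(\tau)\) with \(h(\tau)=1-(1-\tau)_+^m\), so the inequality reduces to the one-variable bound \(h(\tau)\asymp\min\{\tau,1\}\) on \([0,\infty)\). We split into two cases.

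\emph{Case \(\tau\ge1\).} Here \(h(\tau)=1=\min\{\tau,1\}\), and there is nothing to prove.

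\emph{Case \(0\le\tau\le1\).} The function \(h\) is concave on this interval, since \((1-\tau)^m\) is convex, and \(h(0)=0\), \(h(1)=1\). Concavity gives \(h(\tau)\ge\tau\). The mean value theorem, with \(h'(\tau)=m(1-\tau)^{m-1}\le m\), gives \(h(\tau)\le m\tau\).

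Thus \(\min\{\tau,1\}\le h\le m\min\{\tau,1\}\). Combining this with the comparisons of the first paragraph yields \eqref{eq:defect-two-sided}, with \(c=a^m\min\{a,1\}\) and \(C=m\,b^m\max\{b,1\}\) up to harmless adjustments.

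The only genuinely non-elementary input is the two-sided asymptotic \(U\asymp\delta^{-\gamma}\) near \(\T\), which is taken from \cite{BM92} as recorded above. The rest is bookkeeping, and the one point needing care is that the constants stay uniform in \(z\). This is guaranteed because \(a\) and \(b\) are fixed once \(z\) is restricted to a fixed annulus near \(\T\), which is exactly the phrase ``sufficiently close to \(\T\)'' in the lemma.
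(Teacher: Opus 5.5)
Your proof is correct and follows the paper's own argument: factor \(\calA(z,t)=U^m\bigl[1-(1-t/U)_+^m\bigr]\), use \(1-(1-\vartheta)_+^m\asymp\min\{\vartheta,1\}\), and then insert \(U\asymp\delta^{-\gamma}\) together with \(\gamma m=\gamma+2\); you also write out the concavity and mean-value details that the paper leaves implicit. One small slip: since \(\lambda=1/(U\delta^\gamma)\in[1/b,1/a]\), your comparison actually gives \(c=a^m\min\{1/b,1\}\) and \(C=m\,b^m\max\{1/a,1\}\) rather than the constants you wrote, but this does not affect the argument.
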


\begin{proof}
Using \eqref{eq:defect-definition} and
\(1-(1-\vartheta)_+^m\asymp\min\{\vartheta,1\}\) for \(\vartheta\ge0\),
\[
\calA(z,t)
=U(z)^m
\left[
1-\left(1-\frac{t}{U(z)}\right)_+^m
\right]
\asymp
U(z)^m\min\left\{\frac{t}{U(z)},1\right\}.
\]
The result follows from \eqref{eq:U-asymptotic} and
\eqref{eq:gamma-m}.
\end{proof}

To identify the harmonic part in the Riesz decomposition, we use the following
boundary-trace fact.

\begin{lemma}
\label{lem:green-zero-weak-trace}
Let \(\lambda\) be a positive Radon measure in \(\D\) with
\[
\int_{\D}\delta(w)\,d\lambda(w)<\infty.
\]
Then \(\calG\lambda\) has zero weak boundary trace: for every
\(\varphi\in C(\T)\),
\[
\lim_{r\to1^-}
\int_{\T}\calG\lambda(r\xi)\varphi(\xi)\,|d\xi|=0.
\]
\end{lemma}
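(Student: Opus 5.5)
We will show that the circle averages of \(\calG\lambda\) tend to zero in \(L^1\). This gives the weak trace statement at once, since for \(\varphi\in C(\T)\),
\[
\Bigl|\int_{\T}\calG\lambda(r\xi)\varphi(\xi)\,|d\xi|\Bigr|\le\|\varphi\|_\infty\int_{\T}\calG\lambda(r\xi)\,|d\xi|,
\]
and \(\calG\lambda\ge0\). So it suffices to prove
\[
\lim_{r\to1^-}\int_{\T}\calG\lambda(r\xi)\,|d\xi|=0 .
\]
By Tonelli the left-hand integral equals
\[
\frac1{2\pi}\int_{\D}M_r(w)\,d\lambda(w),\qquad M_r(w):=\int_{\T}G(r\xi,w)\,|d\xi|.
\]
We will compute \(M_r\) explicitly and conclude by dominated convergence with respect to \(\lambda\).

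\textbf{Computing \(M_r\).} Fix \(w\) and view \(z\mapsto G(z,w)=\log|1-\bar w z|-\log|z-w|\) on the circle \(|z|=r\).

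The first term is harmonic in \(|z|<1/|w|\). Its mean value over \(|z|=r\) is therefore its value at \(0\), namely \(0\).

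For the second term, Jensen's formula gives
\[
\frac1{2\pi}\int_{\T}\log|r\xi-w|\,|d\xi|=\log\max\{r,|w|\}.
\]
Hence
\[
M_r(w)=2\pi\log\frac{1}{\max\{r,|w|\}}=2\pi\min\Bigl\{\log\frac1r,\ \log\frac1{|w|}\Bigr\}.
\]

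\textbf{Domination and pointwise limit.} We need an integrable majorant that does not depend on \(r\).

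For \(|w|\ge\tfrac12\) we have \(\log(1/|w|)\le C\,\delta(w)\). Since \(M_r(w)\le2\pi\log(1/|w|)\), this gives \(M_r(w)\le C\delta(w)\) on that region, uniformly in \(r\).

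For \(|w|<\tfrac12\) and \(r\ge\tfrac12\), we have \(M_r(w)\le2\pi\log 2\). The mass \(\lambda(\{|w|<\tfrac12\})\) is finite: it is bounded by \(2\int\delta\,d\lambda\) because \(\delta\ge\tfrac12\) there. So the constant \(2\pi\log2\) is \(\lambda\)-integrable on this region.

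Together, \(M_r\le C'\delta\) on \(\D\) for all \(r\ge\tfrac12\), and \(\delta\in L^1(\lambda)\) by hypothesis. Pointwise, \(M_r(w)\le2\pi\log(1/r)\to0\) for each fixed \(w\). Dominated convergence then yields the claim.

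\textbf{Main obstacle.} The only delicate point is the exact evaluation of the circle mean of the Green kernel. This is handled by the mean value property for the harmonic term and Jensen's formula for the logarithmic singularity. Once that formula is in hand, the rest is the routine domination argument above.
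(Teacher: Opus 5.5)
Your proposal is correct and follows the paper's proof essentially line by line: the circle mean $\log\bigl(1/\max\{r,|w|\}\bigr)$ of the Green kernel, Tonelli, and dominated convergence with the majorant $\log 2$ on $\{|w|<\tfrac12\}$ and $C\delta(w)$ elsewhere. The only cosmetic differences are that you derive the circle-mean formula (mean value property plus Jensen) where the paper simply states it, and you bound $\lambda(\{|w|<\tfrac12\})$ via $\delta\ge\tfrac12$ there rather than invoking that $\lambda$ is Radon.
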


\begin{proof}
For the disk Green function,
\[
\frac1{2\pi}\int_0^{2\pi}G(re^{i\phi},w)\,d\phi
=\log\frac1{\max\{r,|w|\}}.
\]
Tonelli's theorem gives
\[
\int_{\T}\calG\lambda(r\xi)\,|d\xi|
=\int_{\D}\log\frac1{\max\{r,|w|\}}\,d\lambda(w).
\]
It is enough to consider \(r\ge1/2\).  Then
\[
0\le \log\frac1{\max\{r,|w|\}}
\le
\log 2\,\mathbf 1_{\{|w|<1/2\}}
+C\delta(w)\,\mathbf 1_{\{|w|\ge1/2\}}.
\]
The right-hand side is \(\lambda\)-integrable, since \(\lambda\) is Radon and
\(\int_{\D}\delta\,d\lambda<\infty\).  Since the integrand tends
pointwise to zero as \(r\to1^-\), dominated convergence gives
\[
\int_{\T}\calG\lambda(r\xi)\,|d\xi|\to0.
\]
Therefore, for every \(\varphi\in C(\T)\),
\[
\left|
\int_{\T}\calG\lambda(r\xi)\varphi(\xi)\,|d\xi|
\right|
\le
\|\varphi\|_\infty
\int_{\T}\calG\lambda(r\xi)\,|d\xi|
\to0.
\]
\end{proof}

The following consequence of the Riesz decomposition is the analogue of
\cite[Lemma~8.4]{IN24} needed here.

\begin{lemma}[Poisson--Green representation and energy bound]
\label{lem:energy-necessary}
Let $u$ be a nearly maximal solution, and suppose its deficiency measure is
$\mu$.  Set $v=U-u$.  Then
\begin{equation}
v=P\mu-\calG\lambda,
\qquad
\lambda=\calA(z,v(z))\,dA(z),
\label{eq:Riesz}
\end{equation}
and
\begin{equation}
\int_\D \delta(z)\calA(z,v(z))\,dA(z)<\infty.
\label{eq:energy-finite}
\end{equation}
\end{lemma}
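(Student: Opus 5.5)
We propose to prove the lemma through the Riesz decomposition of the nonnegative subharmonic function \(v=U-u\), whose Laplacian is \(\lambda:=\calA(z,v)\,dA\ge0\) by \eqref{eq:defect-equation}. The plan has three steps: obtain the energy bound \eqref{eq:energy-finite} first, then use it to show that \(\calG\lambda\) is a genuine potential, and finally identify the harmonic part with \(P\mu\).

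The energy bound comes from Green's formula on the disks \(D_r=\{|z|<r\}\). With \(G_r(z)=\log(r/|z|)\), the function \(\log\) evaluated at the center gives
\[
\frac1{2\pi}\int_0^{2\pi}v(re^{i\phi})\,d\phi-v(0)
=\frac1{2\pi}\int_{D_r}\log\frac{r}{|w|}\,\Delta v(w)\,dA(w).
\]
The left side stays bounded as \(r\to1^-\) by near maximality. On the right, \(\log(r/|w|)\uparrow\log(1/|w|)\ge\delta(w)\), and the integrand is nonnegative. Monotone convergence therefore gives \(\int_\D\log(1/|w|)\,d\lambda<\infty\), which is \eqref{eq:energy-finite} (near the origin the log singularity is integrable, since \(\lambda\) has locally bounded density). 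There is a minor technical point: \(v\) is only \(C^2\) where \(u\) is, but \(u\) solves a semilinear equation with continuous right-hand side, so \(u\in C^{1,\beta}\) with \(\Delta u\) continuous, and Green's identity applies.

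Next we show that \(\calG\lambda\) is finite and that \(h:=v+\calG\lambda\) is harmonic. Finiteness of \(\int\delta\,d\lambda\) makes \(\calG\lambda\) a superharmonic function that is not identically \(+\infty\), and \(\Delta\calG\lambda=-\lambda\). Hence \(h\) satisfies \(\Delta h=0\) in the distributional sense, and Weyl's lemma makes it harmonic. Since \(v\ge0\) and \(\calG\lambda\ge0\), we have \(h\ge0\), so Herglotz gives \(h=P\sigma\) for some finite positive measure \(\sigma\) on \(\T\). Here the normalization \(\frac1{2\pi}\) together with \(|d\xi|\) matches the weak trace of \(P\sigma\), which is \(\sigma\).

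It remains to identify \(\sigma=\mu\). By Lemma~\ref{lem:green-zero-weak-trace}, \(\calG\lambda\) has zero weak trace. The weak trace of \(v=P\sigma-\calG\lambda\) is therefore \(\sigma\), while by definition it is \(\mu\); so \(\sigma=\mu\), which gives \eqref{eq:Riesz}. The main obstacle is the first step, namely justifying Green's formula up to the boundary together with the monotone passage \(r\to1\). If regularity is an issue, we would fall back on approximating by smooth mollifications of \(v\) on \(D_r\); alternatively, we would invoke the local Riesz decomposition on each \(D_r\) and let \(r\to1\) via the monotone increase of the Green kernels \(G_{D_r}\uparrow G\).
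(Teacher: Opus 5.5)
Your proposal is correct and is essentially the paper's argument. The paper obtains the decomposition from the least harmonic majorant and then reads off the energy bound from $\calG\lambda(0)=h(0)-v(0)<\infty$ together with $\delta(w)\le\log(1/|w|)$; this is exactly your Riesz--Jensen identity on $D_r$ in the limit $r\to1^-$, after which the harmonic part is identified via Lemma~\ref{lem:green-zero-weak-trace} and Herglotz just as you do. The regularity issue you flag is not a real obstacle, since Green's formula is only applied on $\overline{D_r}\subset\D$, where $v$ is $C^2$.
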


\begin{proof}
Define the radial mass function
\[
\Phi_v(r)=\int_{\T}v(r\xi)\,|d\xi|.
\]
By \eqref{eq:defect-equation}, \(v\) is subharmonic, so
\(\Phi_v(r)\) is nondecreasing in \(r\).
Near maximality gives
\[
\sup_{0<r<1}\Phi_v(r)<\infty.
\]
Hence \(v\) has a
finite least harmonic majorant \(h\) \cite[Theorem~3.6.6]{AG01}.  Since
\(\calA(\cdot,v)\) is continuous and nonnegative, the measure
\(\lambda=\calA(z,v(z))\,dA(z)\) is a positive Radon measure.  The Riesz
decomposition on the disk gives
\[
v=h-\calG\lambda,
\qquad
\lambda=\calA(z,v(z))\,dA(z);
\]
see \cite[Corollary~4.4.2(ii)]{AG01}.  Evaluating at zero yields
\(\calG\lambda(0)=h(0)-v(0)<\infty\).
Since
\(1-|w|\le-\log|w|=G(0,w)\) for \(0<|w|<1\), and
\[
\calG\lambda(0)
=
\frac1{2\pi}\int_{\D}G(0,w)\,d\lambda(w),
\]
we obtain
\[
\int_{\D}\delta(w)\,d\lambda(w)
\le
2\pi\calG\lambda(0)<\infty.
\]
As \(d\lambda(w)=\calA(w,v(w))\,dA(w)\), this is precisely
\eqref{eq:energy-finite}.

By Lemma~\ref{lem:green-zero-weak-trace}, \(\calG\lambda\) has zero weak
boundary trace.  Thus, for every \(\varphi\in C(\T)\),
\[
\lim_{r\to1^-}\int_\T h(r\xi)\varphi(\xi)\,|d\xi|
=\int_\T\varphi\,d\mu.
\]
Every positive harmonic function on \(\D\) has a unique finite measure
\(\rho\) such that
\[
h(z)=\int_{\T}\frac{1-|z|^2}{|z-\xi|^2}\,d\rho(\xi).
\]
Its weak boundary trace with respect to \(|d\xi|\) is \(2\pi\rho\).
Comparing with the preceding identity gives
\(\rho=\mu/(2\pi)\), and therefore \(h=P\mu\) with the normalization in
\eqref{eq:Riesz}.  This also fixes the normalization of \(\mu\) uniquely.
\end{proof}

\section{Minimal-fine localization and Whitney balls}
\label{sec:minimal-fine-localization}

We next use minimal thinness to locate points where $P\mu$ dominates the Green
potential in \eqref{eq:Riesz}.

All reductions below are lower-semicontinuously regularized reductions.  Thus,
if $h_0$ is a positive superharmonic function in $\D$ and $F\subset\D$ is Borel,
$\red_{h_0}^F$ denotes the lower-semicontinuous regularization of the infimum of
all positive superharmonic functions that dominate $h_0$ on $F$.  For
$\zeta\in\T$ write
\[
M_\zeta(z)=P(z,\zeta)=\frac{1-|z|^2}{|z-\zeta|^2},
\qquad M_\zeta(0)=1.
\]

\begin{proposition}[Na\"\i m reduction criterion]
\label{prop:naim-wiener}
Let \(F\subset\D\) be Borel and let \(\zeta\in\T\).  If \(F\) is minimally
thin at \(\zeta\), then
\[
\inf_V \red_{M_\zeta}^{F\cap V}(0)=0,
\]
where the infimum is taken over all Martin-topology neighbourhoods \(V\)
of \(\zeta\).
\end{proposition}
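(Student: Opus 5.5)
We will prove the criterion by unpacking the definition of minimal thinness and then applying monotone limits of reductions.

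\emph{Step 1: the definition.} We use the Doob--Na\"\i m definition from \cite{Doob84,AG01}. A set \(F\) is minimally thin at the minimal Martin boundary point \(\zeta\) when \(\red_{M_\zeta}^{F}\ne M_\zeta\). For the disk, the Martin boundary is \(\T\) with the Euclidean topology, and every boundary point is minimal with kernel \(M_\zeta\). We will also use the equivalent formulation that there is a Martin neighbourhood \(W\) of \(\zeta\) with \(\red_{M_\zeta}^{F\cap W}\ne M_\zeta\). This holds because \(F\setminus W\) stays away from \(\zeta\), so \(\red_{M_\zeta}^{F\setminus W}\) is a potential-type function and cannot equal \(M_\zeta\).

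\emph{Step 2: the limit is invariant.} Choose a decreasing sequence of neighbourhoods \(V_j\) shrinking to \(\zeta\), for instance \(V_j=\{z\in\D:|z-\zeta|<1/j\}\). Then \(\red_{M_\zeta}^{F\cap V_j}\) decreases in \(j\). Let \(w\) be the regularized limit; it is a positive superharmonic function with \(w\le M_\zeta\). For each \(j\) we have \(\red_{M_\zeta}^{F}\le \red_{M_\zeta}^{F\cap V_j}+\red_{M_\zeta}^{F\setminus V_j}\). The second term is dominated by the harmonic measure of \(F\setminus V_j\), weighted by \(M_\zeta\). Since \(M_\zeta\) is bounded on the closure of \(\D\setminus V_j\) away from \(\zeta\), the term \(\red_{M_\zeta}^{F\setminus V_j}\) is a potential. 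We would then argue that \(w=\red_{w}^{F\cap V_j}\) for every \(j\). Following \cite[Ch.~9]{AG01}, the key identity is that \(w\) is harmonic off \(\overline F\) and satisfies \(w=\red_w^{F\cap V}\) for every neighbourhood \(V\).

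\emph{Step 3: minimality forces \(w=cM_\zeta\).} Any positive harmonic minorant of \(M_\zeta\) is a multiple of \(M_\zeta\) by minimality. The greatest harmonic minorant of \(w\) is therefore \(cM_\zeta\) with \(0\le c\le1\). Since each reduction differs from its greatest harmonic minorant by a potential supported on \(\overline{F\cap V_j}\), and these supports shrink to \(\zeta\), the potential part of \(w\) has Riesz mass escaping to the boundary. Hence \(w=cM_\zeta\). If \(c>0\), then \(M_\zeta=c^{-1}w\le c^{-1}\red_{M_\zeta}^{F\cap V_j}\). By the standard 0--1 law for reductions of minimal harmonic functions (\cite[Theorem~9.2.?]{AG01}, Doob), \(\red_{M_\zeta}^{F\cap V}\) equals either \(M_\zeta\) for all \(V\) or tends to \(0\), and the first alternative means \(F\) is not minimally thin. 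So \(c=0\) and \(w=0\).

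\emph{Step 4: conclusion.} Evaluating at \(0\) and using monotone convergence of reductions at a point where the regularization is harmless gives \(\inf_V\red_{M_\zeta}^{F\cap V}(0)\le\lim_j\red_{M_\zeta}^{F\cap V_j}(0)=w(0)=0\). At \(0\), which lies off \(\overline{F\cap V_j}\) for large \(j\), the reductions are harmonic, so no regularization issue arises.

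The main obstacle is the 0--1 dichotomy in Step 3. I would simply cite it from \cite{Doob84} (Part 1, Ch.~XII) or \cite[Section~9.2]{AG01}, where it is stated exactly in this form, rather than reprove it.
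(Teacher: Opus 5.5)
Your argument has a genuine gap: the decisive step assumes what you are trying to prove.

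Steps 2--3 do reach the right intermediate point. Because $\overline{F\cap V_j}$ eventually misses every compact subset of $\D$, the decreasing limit $w$ of $\red_{M_\zeta}^{F\cap V_j}$ is harmonic on all of $\D$. That is what matters, not the unjustified identity ``$w=\red_w^{F\cap V}$, harmonic off $\overline F$''. Minimality of $M_\zeta$ then gives $w=cM_\zeta$.

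However, you get $c=0$ by invoking a ``0--1 law'' whose second alternative, $\red_{M_\zeta}^{F\cap V}\to0$ when $F$ is minimally thin, is exactly the proposition being proved. As written, the argument is circular. Steps 1--3 do no work, and the proof reduces to an unspecified (``Theorem~9.2.?'') citation of the statement itself.

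The missing idea is the dichotomy for reductions of a minimal harmonic function: for every $E\subset\D$, $\red_{M_\zeta}^{E}$ is either $M_\zeta$ or a potential. Here is why.
\begin{itemize}
\item Write $\red_{M_\zeta}^{E}=c'M_\zeta+p$, where $c'M_\zeta$ is the greatest harmonic minorant (minimality again) and $p$ is a potential.
\item Since $\red_{M_\zeta}^{E}=M_\zeta$ on $E$ outside a polar set, $p=(1-c')M_\zeta$ there. Hence $p\ge\red_{(1-c')M_\zeta}^{E}=(1-c')\red_{M_\zeta}^{E}\ge c'(1-c')M_\zeta$.
\item A potential has no positive harmonic minorant, so $c'\in\{0,1\}$.
\end{itemize}

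With this, minimal thinness of $F$ makes $\red_{M_\zeta}^{F}$ a potential. Then $0\le w=cM_\zeta\le\red_{M_\zeta}^{F\cap V_j}\le\red_{M_\zeta}^{F}$ forces $c=0$, and your Step 4 finishes the proof. Your route then becomes a genuine, self-contained proof.

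The paper does not prove the criterion. It enlarges $F$ to an open set $\mathcal O$ that is minimally thin at $\zeta$ \cite[Lemma~9.2.2(c)]{AG01}. It applies the reduction criterion \cite[Theorem~9.2.5]{AG01} to $\mathcal O$, and concludes by monotonicity, $\red_{M_\zeta}^{F\cap V}\le\red_{M_\zeta}^{\mathcal O\cap V}$. If you are content to cite, cite that theorem precisely and drop Steps 1--3. If you want an actual proof, add the dichotomy lemma.
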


\begin{proof}
By the open-enlargement property \cite[Lemma~9.2.2(c)]{AG01}, \(F\) is
contained in an open set \(\mathcal O\) that is minimally thin at \(\zeta\).
The reduction criterion \cite[Theorem~9.2.5]{AG01} implies that
\(\red_{M_\zeta}^{\mathcal O\cap V}(0)\)
is arbitrarily small as \(V\) ranges over the Martin neighbourhoods of
\(\zeta\).  Since \(F\cap V\subset \mathcal O\cap V\), the result follows by
monotonicity of reduction.
\end{proof}

For a domain \(\Omega\), a point \(z\in\Omega\), and a Borel set
\(A\subset\partial\Omega\), we denote by \(\omega_\Omega^z(A)\) the
harmonic measure of \(A\) in \(\Omega\) with pole at \(z\).

\begin{lemma}[Uniform reduction of Whitney balls]
\label{lem:uniform-reduction}
Fix constants $c_*>2\tau>0$.  There are constants $r_0>0$ and $c_W>0$,
depending only on $c_*$ and $\tau$, such that for every $\zeta\in\T$ and every
$0<r<r_0$, if
\[
W_{\zeta,r}=\overline{B\bigl((1-c_*r)\zeta,\tfrac12\tau r\bigr)},
\]
then
\[
\red_{M_\zeta}^{W_{\zeta,r}}(0)\ge c_W.
\]
\end{lemma}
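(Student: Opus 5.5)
We will bound the reduction from below by a harmonic-measure computation on the complement of the ball, then use the Möbius invariance of the disk to make all constants uniform in \(\zeta\) and \(r\). Write \(W=W_{\zeta,r}\), a closed disk of radius \(\tfrac12\tau r\) centred at \(z_r=(1-c_*r)\zeta\). Since \(c_*>2\tau>\tfrac12\tau\), the ball lies in \(\D\) at distance \(\asymp r\) from \(\T\). For small \(r\) it also stays away from \(0\).

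\textbf{Step 1: identify the reduction as a harmonic measure integral.} Since \(W\) is a closed disk, it is non-thin at each of its points. Hence the regularized reduction \(\red_{M_\zeta}^{W}\) is the solution of the Dirichlet problem in \(\Omega=\D\setminus W\): it has boundary values \(M_\zeta\) on \(\partial W\) and is the smallest positive harmonic extension at \(\T\). Concretely,
\[
\red_{M_\zeta}^{W}(0)=\int_{\partial W}M_\zeta\,d\omega_\Omega^0 .
\]
This is the standard balayage formula (e.g.\ \cite{AG01}). Alternatively, one can avoid the formula and only use the inequality \(\red_{M_\zeta}^W\ge h\). Here \(h\) is the harmonic function in \(\Omega\) with boundary data \(M_\zeta\) on \(\partial W\) and \(0\) on \(\T\). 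The inequality follows from the minimum principle: any positive superharmonic \(s\ge M_\zeta\) on \(W\) satisfies \(s\ge h\) on \(\Omega\), and the boundary values at \(\T\) cause no trouble because \(h\) vanishes continuously there.

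\textbf{Step 2: pointwise size of \(M_\zeta\) on \(W\).} For \(z\in W\) we have \(1-|z|^2\asymp r\) and \(|z-\zeta|\asymp r\), with constants depending only on \(c_*,\tau\). Therefore \(M_\zeta\ge c_1 r^{-1}\) on \(\partial W\), and
\[
\red_{M_\zeta}^W(0)\ge c_1 r^{-1}\,\omega_\Omega^0(\partial W).
\]

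\textbf{Step 3: harmonic measure of a small Whitney ball.} We must show \(\omega_{\D\setminus W}^0(\partial W)\ge c_2 r\). The function \(z\mapsto\omega_\Omega^z(\partial W)\) is harmonic in \(\Omega\), equal to \(1\) on \(\partial W\) and \(0\) on \(\T\). We compare it with a Green function: set \(g(z)=G(z,z_r)/\max_{\partial W}G(\cdot,z_r)\). This function is harmonic in \(\Omega\), vanishes on \(\T\), and satisfies \(g\le1\) on \(\partial W\). By the maximum principle, \(\omega_\Omega^z(\partial W)\ge g(z)\).

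Now \(G(0,z_r)=-\log|z_r|\asymp r\). On \(\partial W\), the hyperbolic distance to \(z_r\) is bounded above and below, since \(\tfrac12\tau r/(c_*r)=\tau/(2c_*)<1/4\). Hence \(\max_{\partial W}G(\cdot,z_r)\le C\), with \(C\) depending only on \(\tau/c_*\); explicitly,
\[
G(z,z_r)=\log\frac{|1-\bar z_r z|}{|z-z_r|}
\]
is bounded by the logarithm of a ratio of quantities comparable to \(r\). Thus \(\omega_\Omega^0(\partial W)\ge c_2 r\). Combining with Step 2 gives
\[
\red_{M_\zeta}^W(0)\ge c_1c_2=:c_W
\]
for \(r<r_0\), where \(r_0\) is chosen so that \(c_*r_0<1/2\) and all comparisons hold.

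The main obstacle is Step 1: checking rigorously that the regularized reduction in the Martin/minimal sense dominates the Dirichlet solution \(h\), i.e.\ that no mass is lost at \(\T\). This is routine for a compact \(W\subset\D\). The reduction of \(M_\zeta\) on a compact set is a potential of a measure on \(\partial W\), and it agrees with \(M_\zeta\) on \(W\) off a polar set, which is empty here by regularity of disks. So I would cite the corresponding statement in \cite{AG01} and then apply the minimum principle as above.
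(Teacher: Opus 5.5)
Your proof is correct, and your Steps 1--2 match the paper's opening. The paper uses $M_1\asymp r^{-1}$ on $W$ and homogeneity to get $\red_{M_1}^{W}(0)\ge c\,r^{-1}\red_1^{W}(0)=c\,r^{-1}\omega_{\D\setminus W}^0(\partial W)$, which is the same as your minimum-principle comparison with the Dirichlet solution $h$. Your flagged ``main obstacle'' is not really one, because only the easy inequality $\red\ge h$ is needed. Your own argument already gives it: lower semicontinuity of any competitor $s$ on $\partial W$ and $s\ge0$ near $\T$ yield $s\ge h$ in $\D\setminus W$. Continuity of $h$ there passes this to the regularization.

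The genuine difference is Step 3, the lower bound $\omega_{\D\setminus W}^0(\partial W)\gtrsim r$.

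\textbf{Paper's route.} It maps $\D$ to $\Hh$ by $\Psi_r(z)=r^{-1}i(1-z)/(1+z)$. Using uniform convergence and the argument principle, it shows every image obstacle $\Psi_r(W)$ contains a fixed disk $K_-$. It then runs Brownian motion from $i/r$. An explicit Cauchy-density bound shows a fixed segment of a horizontal line is hit with probability $\gtrsim r$. The strong Markov property, together with a uniform lower bound for reaching $K_-$ from that segment, finishes the estimate.

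\textbf{Your route.} You compare directly with the normalized Green function $G(\cdot,z_r)/\max_{\partial W}G(\cdot,z_r)$. On $\partial W$ one has $|1-\bar z_r z|\le (1-|z_r|^2)+|z-z_r|\le (2c_*+\tau/2)r$ and $|z-z_r|=\tau r/2$. Hence $\max_{\partial W}G(\cdot,z_r)\le\log(1+4c_*/\tau)$. Since $G(0,z_r)=-\log(1-c_*r)\ge c_*r$, you get $\omega^0\ge c_*r/\log(1+4c_*/\tau)$ in one line. This gives the explicit constant $c_W\ge \frac{c_*(c_*-\tau/2)}{(c_*+\tau/2)^2\log(1+4c_*/\tau)}$.

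\textbf{Comparison.} Your argument is shorter and fully deterministic. It needs no conformal change of variables, no containment argument for $K_-$, and no strong Markov property. The paper's half-plane picture makes the $1/R$ hitting-probability scaling visually transparent, but that is all it buys over your approach.
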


\begin{proof}
By rotation, assume \(\zeta=1\).  On
\[
W_{1,r}=\overline{B(1-c_*r,\tfrac12\tau r)}
\]
one has
$
1-|z|\asymp r,
|1-z|\asymp r,
$
and hence
\[
M_1(z)=\frac{1-|z|^2}{|1-z|^2}\asymp r^{-1}
\qquad (z\in W_{1,r}).
\]
Therefore, by monotonicity and homogeneity of reduction,
\[
\red_{M_1}^{W_{1,r}}(0)
\ge
\red_{c r^{-1}}^{W_{1,r}}(0)
=
c r^{-1}\red_1^{W_{1,r}}(0).
\]
The reduced function \(\red_1^{W_{1,r}}\) is the equilibrium potential of the
compact obstacle \(W_{1,r}\), so
\[
\red_1^{W_{1,r}}(0)
=
\omega_{\D\setminus W_{1,r}}^0(\partial W_{1,r}).
\]

It remains to show that this harmonic measure is \(\gtrsim r\), uniformly
for small \(r\).  Put
\[
\Psi_r(z)=r^{-1}i\frac{1-z}{1+z}.
\]
This map sends \(\D\) conformally onto \(\Hh\), sends \(0\) to
\(i/r\), and sends \(W_{1,r}\) to a compact set \(K_r\Subset\Hh\).
Indeed, writing \(z=1-r\omega\), we have
\[
\Psi_r(1-r\omega)=\frac{i\omega}{2-r\omega},
\qquad |\omega-c_*|\le \frac{\tau}{2}.
\]
Let
\[
D_*:=\overline{B(c_*,\tau/2)},
\qquad f_r(\omega):=\frac{i\omega}{2-r\omega}.
\]
After first decreasing \(r_0\) so that
\(r(c_*+\tau/2)<1\), each \(f_r\) is a holomorphic injective fractional
linear map on a neighbourhood of \(D_*\).  The maps \(f_r\) converge
uniformly on \(D_*\) to \(f_0(\omega)=i\omega/2\), and
\[
f_0(D_*)=\overline{B(ic_*/2,\tau/4)}.
\]
Define the fixed disk
\[
K_-:=\overline{B(ic_*/2,\tau/8)}\Subset\Hh.
\]
The last compact containment follows from \(c_*>2\tau\).  Moreover,
\(K_-\) has distance \(\tau/8\) from \(f_0(\partial D_*)\).  Uniform
convergence on \(\partial D_*\), followed by invariance of the winding number
(or the argument principle), therefore gives
\[
K_-\subset f_r(D_*)=K_r=\Psi_r(W_{1,r})
\qquad(0<r<r_0)
\]
after decreasing \(r_0\).  Thus every \(K_r\) contains the same fixed
sub-obstacle \(K_-\).

Let \(R=1/r\), and let \((B_t)_{t\ge0}\) be planar Brownian motion in
\(\Hh\), started at \(iR\).  For a closed set \(A\), write
\(T_A=\inf\{t\ge0:B_t\in A\}\).  Take
\[
Y=\frac{c_*}{2}+\frac{\tau}{2},
\qquad
I=\{x+iY:|x|\le\tau/8\}.
\]
Thus the horizontal line \(L_Y=\{\Im z=Y\}\) lies strictly above \(K_-\).
The first-hit distribution on \(L_Y\), for Brownian motion started at
\(iR\), has density
\[
\frac{R-Y}{\pi(x^2+(R-Y)^2)}\,dx.
\]
For \(R\ge2Y+1\), integrating this density over \(|x|\le\tau/8\) gives
\[
\mathbb P^{iR}\{B_{T_{L_Y}}\in I\}
\ge \frac{c_{\mathrm{hit}}}{R},
\]
where \(c_{\mathrm{hit}}>0\) depends only on \(c_*\) and \(\tau\).  Moreover,
\[
c_{\mathrm{obs}}:=
\inf_{w\in I}
\omega_{\Hh\setminus K_-}^{w}(\partial K_-)>0,
\]
by positivity and continuity of harmonic measure on the compact interval
\(I\); here we use that the disk \(K_-\) is nonpolar.  The strong Markov
property at the first hitting time of \(L_Y\)
therefore yields
\[
\mathbb P^{iR}\{T_{K_-}<T_{\R}\}
\ge c_{\mathrm{hit}}c_{\mathrm{obs}}R^{-1}.
\]
Since
\(K_-\subset K_r\), conformal invariance and monotonicity of
obstacle-hitting probabilities now give
\[
\omega_{\D\setminus W_{1,r}}^0(\partial W_{1,r})
=
\omega_{\Hh\setminus K_r}^{iR}(\partial K_r)
\ge cR^{-1}
=cr.
\]
Combining the two estimates gives
\[
\red_{M_1}^{W_{1,r}}(0)\ge c_W>0.
\]
Rotation invariance completes the proof.
\end{proof}

Consequently, a minimally thin set cannot contain a full sequence of shrinking
Whitney balls.

\begin{lemma}
\label{lem:minthin-no-whitney}
Let \(F\subset\D\) be minimally thin at \(\zeta\in\T\), let
\(r_n\to0\), and fix \(c_*>2\tau>0\).  Then
\[
\overline{B\bigl((1-c_*r_n)\zeta,\tfrac12\tau r_n\bigr)}\subset F
\]
can hold for only finitely many indices \(n\).
\end{lemma}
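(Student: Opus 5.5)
The plan is to argue by contradiction, combining Proposition~\ref{prop:naim-wiener} with the uniform lower bound of Lemma~\ref{lem:uniform-reduction}. Suppose the inclusion
\[
W_n:=\overline{B\bigl((1-c_*r_n)\zeta,\tfrac12\tau r_n\bigr)}\subset F
\]
holds for infinitely many \(n\). Passing to that subsequence, we may assume it holds for all \(n\), with \(r_n\to0\). Let \(r_0\) and \(c_W\) be the constants from Lemma~\ref{lem:uniform-reduction} for the fixed pair \(c_*>2\tau\). Discarding finitely many indices, we may also assume \(r_n<r_0\), so that \(\red_{M_\zeta}^{W_n}(0)\ge c_W\) for every \(n\).

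Next I fix an arbitrary Martin-topology neighbourhood \(V\) of \(\zeta\). For the disk, the Martin boundary is \(\T\) and the Martin topology coincides with the Euclidean topology on \(\overline{\D}\). Hence \(V\) contains \(\overline{\D}\cap B(\zeta,\rho)\) for some \(\rho>0\). Each point of \(W_n\) lies within distance \((c_*+\tau/2)r_n\) of \(\zeta\), and this tends to \(0\). So for all large \(n\) we have \(W_n\subset F\cap V\). Monotonicity of reduction in the set then gives
\[
\red_{M_\zeta}^{F\cap V}(0)\ge\red_{M_\zeta}^{W_n}(0)\ge c_W .
\]
Since \(V\) was arbitrary, \(\inf_V\red_{M_\zeta}^{F\cap V}(0)\ge c_W>0\). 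This contradicts Proposition~\ref{prop:naim-wiener}, because \(F\) is minimally thin at \(\zeta\).

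I do not expect a real obstacle here. The only points needing care are two justifications.

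\emph{Martin topology.} The identification of the Martin topology near \(\zeta\) with the Euclidean one is standard for Lipschitz, and in particular smooth, domains; see \cite{AG01}. It ensures that Euclidean balls about \(\zeta\) eventually contain the \(W_n\) relative to any Martin neighbourhood.

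\emph{Monotonicity under regularization.} Monotonicity \(\red_h^{A}\le\red_h^{B}\) for \(A\subset B\) survives lower-semicontinuous regularization, since regularizing preserves pointwise order between the unregularized infima.

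Both facts are routine, so the proof will be short.
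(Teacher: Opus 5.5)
Your proposal is correct and matches the paper's proof essentially step for step. You pass to a subsequence of Whitney balls contained in \(F\), note that they eventually lie in every Martin neighbourhood \(V\) of \(\zeta\) because the Martin compactification of \(\D\) is \(\overline{\D}\), and bound \(\red_{M_\zeta}^{F\cap V}(0)\ge c_W\) using monotonicity and Lemma~\ref{lem:uniform-reduction}, which contradicts Proposition~\ref{prop:naim-wiener}; your explicit restriction to \(r_n<r_0\) is a detail the paper leaves implicit.
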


\begin{proof}
Suppose that the inclusion holds along a subsequence, and denote the
corresponding balls by \(W_j\).  Then
\[
\sup_{z\in W_j}|z-\zeta|
\le
\left(c_*+\frac{\tau}{2}\right)r_{n_j}
\to0.
\]
Since the Martin compactification of \(\D\) is the closed disk,
every Martin neighbourhood \(V\) of \(\zeta\) contains \(W_j\) for all
sufficiently large \(j\).  Since \(W_j\subset F\), we have
\(W_j\subset F\cap V\).  Hence, by monotonicity of reduction and
Lemma~\ref{lem:uniform-reduction},
\[
\red_{M_\zeta}^{F\cap V}(0)
\ge \red_{M_\zeta}^{W_j}(0)
\ge c_W>0.
\]
Since \(V\) was arbitrary, taking the infimum over all Martin
neighbourhoods of \(\zeta\) contradicts
Proposition~\ref{prop:naim-wiener}.
\end{proof}

We can now combine the relative Fatou theorem with
Lemma~\ref{lem:minthin-no-whitney} to obtain the localization statement needed
later on.

\begin{proposition}[Fatou--Na\"\i m--Doob theorem]
\label{prop:FND}
Let \(\mu\) be a nonzero finite positive measure on \(\T\), and let
\(h=P\mu\). Let \(\lambda\) be a positive Radon measure in \(\D\)
satisfying
\[
\int_\D \delta(z)\,d\lambda(z)<\infty.
\]
Assume that \(0\le\calG\lambda\le h\) in \(\D\). Then
\[
\frac{\calG\lambda(z)}{h(z)}\longrightarrow0
\]
as \(z\to\zeta\) along the minimal fine filter, for
\(\mu\)-almost every \(\zeta\in\T\).
\end{proposition}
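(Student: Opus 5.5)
This is the relative Fatou theorem of Doob and Naïm for quotients of positive superharmonic functions by a positive harmonic function. I would prove it by reducing to the standard statement for a Green potential divided by a minimal harmonic function, and then integrating against \(\mu\).

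\textbf{Step 1: \(\calG\lambda\) is a potential.} Because \(\int_\D\delta\,d\lambda<\infty\), the Green potential \(\calG\lambda\) is a genuine superharmonic function, finite at \(0\) and not identically infinite. By Lemma~\ref{lem:green-zero-weak-trace} its greatest harmonic minorant is zero, so it is a potential.

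\textbf{Step 2: disintegrate \(\mu\) into minimal functions.} Since the Martin boundary of \(\D\) is \(\T\) with minimal harmonic functions \(M_\zeta\), we have \(h=\frac1{2\pi}\int M_\zeta\,d\mu(\zeta)\). This representation induces the conditioning (the \(h\)-transform). The Doob–Naïm theorem \cite[Theorem~9.4.6]{AG01}, or \cite[Ch.~1.XII]{Doob84}, says the following. For a potential \(p\) and a positive harmonic \(h\) with Martin measure \(\mu_h\), the ratio \(p/h\) tends to \(0\) along the minimal fine filter at \(\mu_h\)-a.e.\ \(\zeta\).

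The mechanism behind this is worth recording. For each \(\zeta\), the quotient \(p/M_\zeta\) has minimal fine limit \(0\) at \(\zeta\) outside a set where \(\{p/M_\zeta>\varepsilon\}\) fails to be minimally thin. The Naïm reduction criterion (Proposition~\ref{prop:naim-wiener}, together with its converse in \cite[Theorem~9.2.5]{AG01}) shows that this exceptional set is \(\mu\)-null. The argument compares \(\int \red_{M_\zeta}^{\{p>\varepsilon h\}\cap V}(0)\,d\mu(\zeta)\) with \(\red_h^{\{p>\varepsilon h\}\cap V}(0)\le \varepsilon^{-1}\red_p^{V}(0)\). The last quantity tends to \(0\) as \(V\) shrinks to \(\T\), precisely because \(p\) is a potential.

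\textbf{Step 3: pass from \(M_\zeta\) to \(h\).} The step from \(p/M_\zeta\) to \(p/h\) is the standard fact that \(h/M_\zeta\) has minimal fine limit equal to the Radon–Nikodym density of \(\mu\) at \(\zeta\) with respect to \(\delta_\zeta\). In the theorem's form this is absorbed by working directly with the \(h\)-conditioned filter, or equivalently with the \(h\)-process.

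\textbf{Role of the hypotheses and the expected obstacle.} The assumption \(0\le\calG\lambda\le h\) is only needed to ensure that the quotient is bounded, so that no separate integrability issue arises. The main delicate point is the normalization: one must check that the measure in the Martin representation of \(h=P\mu\) is exactly \(\mu\) up to the factor \(1/(2\pi)\), so that "\(\mu\)-a.e." is the correct exceptional-set statement. This follows from the Poisson representation recalled in Lemma~\ref{lem:energy-necessary}, since mutually absolutely continuous measures have the same null sets.
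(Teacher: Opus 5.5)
Your proposal follows the paper's proof: the paper also just cites the Fatou--Na\"\i m--Doob theorem for the potential \(\calG\lambda\) and notes that the representing measure of \(h=P\mu\) is \(\mu/(2\pi)\), which has the same null sets as \(\mu\). Your reduction sketch, with \(A_\varepsilon=\{p>\varepsilon h\}\),
\[
\frac1{2\pi}\int\red_{M_\zeta}^{A_\varepsilon\cap V}(0)\,d\mu(\zeta)=\red_h^{A_\varepsilon\cap V}(0)\le\varepsilon^{-1}\red_{p}^{V}(0)\to0
\]
as \(V\) shrinks to \(\T\), is the standard proof of that case. It already shows directly that \(A_\varepsilon\) is minimally thin at \(\mu\)-a.e.\ \(\zeta\).

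Step~3 should be deleted, because it fails as written. The minimal fine limit of \(h/M_\zeta\) at \(\zeta\) is the atom \(\mu(\{\zeta\})/(2\pi)\), which is zero except at countably many points. Dividing by it therefore cannot turn \(p/M_\zeta\to0\) into \(p/h\to0\).

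A smaller point: \(\calG\lambda(0)<\infty\) follows from \(\calG\lambda\le h\), not from \(\int_\D\delta\,d\lambda<\infty\) alone.
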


\begin{proof}
This is the Fatou--Na\"\i m--Doob theorem in potential form: if
\(h>0\) is harmonic and \(\calG\lambda\) is a Green potential, then
\((\calG\lambda)/h\) has minimal-fine limit zero at almost every
minimal Martin boundary point with respect to the representing measure
of \(h\); see \cite[Theorem~4.2]{Doo59} and
\cite[Section~9.4]{AG01}. In the disk, the minimal Martin boundary is
\(\T\), and the representing measure of our normalized \(h=P\mu\) is
\(\mu/(2\pi)\), which has the same null sets as \(\mu\).
\end{proof}

\begin{lemma}[Minimal-fine Whitney localization]
\label{lem:mf-whitney}
Let \(\mu\) be a nonzero finite positive measure on \(\T\), and let
\(\lambda\) be a positive Radon measure in \(\D\) such that
\[
\int_{\D}\delta(z)\,d\lambda(z)<\infty,
\qquad
0\le\calG\lambda\le P\mu.
\]
Let \(r_n\to0\) and fix \(c_*>2\tau>0\).  After discarding finitely many
terms, assume that the balls below are compactly contained in \(\D\),
and put
\[
\widetilde B_{\zeta,n}
=\overline{B\bigl((1-c_*r_n)\zeta,\tfrac12\tau r_n\bigr)}.
\]
For every \(\varepsilon>0\), there are measurable sets
\(\Gamma_{\varepsilon,N}\subset\T\) such that
\(\bigcup_{N\ge1}\Gamma_{\varepsilon,N}\) has full \(\mu\)-measure and,
whenever \(\zeta\in \Gamma_{\varepsilon,N}\) and \(n\ge N\), the ball
\(\widetilde B_{\zeta,n}\) contains a point \(z\) satisfying
\begin{equation}\label{eq:mf-good-point}
\calG\lambda(z)\le\varepsilon P\mu(z).
\end{equation}
\end{lemma}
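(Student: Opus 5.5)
Combine Proposition~\ref{prop:FND} with Lemma~\ref{lem:minthin-no-whitney}. By Proposition~\ref{prop:FND} there is a Borel set \(\Gamma\subset\T\) of full \(\mu\)-measure such that, for each \(\zeta\in\Gamma\), \(\calG\lambda/P\mu\to0\) along the minimal fine filter at \(\zeta\). Fix \(\varepsilon>0\) and put
\[
F_\varepsilon=\{z\in\D:\calG\lambda(z)>\varepsilon P\mu(z)\}.
\]
A minimal-fine limit of zero means exactly that, for each \(\zeta\in\Gamma\), the set \(F_\varepsilon\) is minimally thin at \(\zeta\). This uses the definition of the minimal fine filter: its members are the sets whose complements are minimally thin. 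We may enlarge the exceptional set by a \(\mu\)-null set if needed.

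Now apply Lemma~\ref{lem:minthin-no-whitney} to \(F=F_\varepsilon\), the sequence \(r_n\), and the fixed constants \(c_*>2\tau\). For each \(\zeta\in\Gamma\), the inclusion \(\widetilde B_{\zeta,n}\subset F_\varepsilon\) holds for only finitely many \(n\). Hence there is a least integer \(N(\zeta)\) such that, for all \(n\ge N(\zeta)\), the ball \(\widetilde B_{\zeta,n}\) is not contained in \(F_\varepsilon\). Such a ball therefore contains a point \(z\) with \(\calG\lambda(z)\le\varepsilon P\mu(z)\), which is \eqref{eq:mf-good-point}. Define
\[
\Gamma_{\varepsilon,N}=\{\zeta\in\Gamma:\ \widetilde B_{\zeta,n}\not\subset F_\varepsilon\ \text{for all } n\ge N\}.
\]
These sets increase in \(N\), and their union contains \(\Gamma\), so it has full \(\mu\)-measure.

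The main obstacle is measurability of \(\Gamma_{\varepsilon,N}\). I would handle it with continuity. The function \(P\mu\) is continuous and positive in \(\D\). The function \(\calG\lambda\) is lower semicontinuous, and in our application it is in fact continuous, since \(\lambda\) has a continuous density and \(\calG\lambda = P\mu - v\); in the general statement I would rely only on lower semicontinuity. Thus \(F_\varepsilon\) is open. For fixed \(n\), consider the set of \(\zeta\) with \(\widetilde B_{\zeta,n}\subset F_\varepsilon\). Since \(\widetilde B_{\zeta,n}\) is compact and moves continuously with \(\zeta\), and \(F_\varepsilon\) is open, this set is open in \(\T\). So its complement is closed, and \(\Gamma_{\varepsilon,N}\) is \(\Gamma\) intersected with a countable intersection of closed sets, hence Borel.

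If one prefers to avoid this topology, an alternative is to replace \(\Gamma_{\varepsilon,N}\) by a Borel subset of equal outer measure. The stated conclusion only requires that the union be \(\mu\)-conull. I expect no further difficulty beyond identifying "minimal-fine limit zero" with minimal thinness of \(F_\varepsilon\).
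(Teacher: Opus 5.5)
Your proposal is correct and follows the paper's proof essentially step for step. The shared steps are the same set \(F_\varepsilon\), its minimal thinness at \(\mu\)-a.e.\ \(\zeta\) from Proposition~\ref{prop:FND}, eventual noncontainment \(\widetilde B_{\zeta,n}\not\subset F_\varepsilon\) from Lemma~\ref{lem:minthin-no-whitney}, and the same measurability argument: \(F_\varepsilon\) is open by lower semicontinuity of \(\calG\lambda\), and \(\{\zeta:\widetilde B_{\zeta,n}\subset F_\varepsilon\}\) is open. Your intersection with \(\Gamma\) is harmless. The ``Borel subset of equal outer measure'' fallback is unnecessary, and it would not work for a genuinely nonmeasurable set, so you can drop it.
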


\begin{proof}
For \(\varepsilon>0\), set
\(F_\varepsilon
=\{z\in\D:\calG\lambda(z)>\varepsilon P\mu(z)\}\).
By Proposition~\ref{prop:FND} and the definition of minimal-fine
convergence, \(F_\varepsilon\) is minimally thin at
\(\mu\)-almost every \(\zeta\in\T\).

For fixed \(w\), the Green kernel \(G(\cdot,w)\) is nonnegative and
lower semicontinuous, with value \(+\infty\) at \(w\). Fatou's lemma
therefore shows that \(\calG\lambda\) is lower semicontinuous. Since
\(P\mu\) is continuous,
\(\calG\lambda-\varepsilon P\mu\) is lower semicontinuous, and hence
\(F_\varepsilon\) is open.

At every such boundary point,
Lemma~\ref{lem:minthin-no-whitney} gives
$
\widetilde B_{\zeta,n}\not\subset F_\varepsilon
$
for all sufficiently large \(n\).
Define
\[
\Gamma_{\varepsilon,N}=\bigcap_{n\ge N}
\{\zeta:\widetilde B_{\zeta,n}\not\subset F_\varepsilon\}.
\]
The set where \(\widetilde B_{\zeta,n}\subset F_\varepsilon\) is open in \(\zeta\):
indeed, a compact subset of the open set \(F_\varepsilon\) has positive distance
from \(\D\setminus F_\varepsilon\), while the ball depends continuously on
\(\zeta\) in the Hausdorff metric.  Hence every \(\Gamma_{\varepsilon,N}\) is Borel.  The
sets \(\Gamma_{\varepsilon,N}\) increase with \(N\), and the preceding eventual
noncontainment shows that their union has full \(\mu\)-measure.  Finally,
\(\widetilde B_{\zeta,n}\not\subset F_\varepsilon\) furnishes a point outside
\(F_\varepsilon\), which is exactly \eqref{eq:mf-good-point}.
\end{proof}

We shall use the following scale-invariant Harnack-chain estimate.

\begin{lemma}[Uniform Harnack-chain estimate]
\label{lem:harnack-chain}
Fix constants \(C_{\mathfrak q}<\infty\), \(C_{\mathrm{ch}}\ge1\), and
\(0<r_+<\infty\).  There is a constant \(H>1\) with the following property.
Let \(j\ge1\) be an integer and \(0<\ell<1\), and suppose that
\(w\in C^2\) is positive and solves
\[
-\Delta w+\mathfrak q(z)w=0
\]
in a union of balls \(B_k=B(x_k,r_k)\), \(1\le k\le N\), where
\(N\le C_{\mathrm{ch}}j\),
\[
0<r_k\le r_+\ell,
\]
the endpoints satisfy \(x\in\frac12B_1\), \(y\in\frac12B_N\), and
\[
\frac12B_k\cap\frac12B_{k+1}\ne\varnothing
\qquad(1\le k<N).
\]
Assume that \(\mathfrak q\) is measurable and that
\[
0\le \mathfrak q(z)\le C_{\mathfrak q}\ell^{-2}
\]
in the chain.  Then
\[
w(y)\ge H^{-j}w(x).
\]
The constant \(H\) depends only on \(C_{\mathfrak q},C_{\mathrm{ch}},r_+\), not on
\(\ell,j,x,y\).
\end{lemma}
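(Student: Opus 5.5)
The plan is to rescale each ball to unit size and apply the interior Harnack inequality for Schrödinger operators with bounded nonnegative potential. Then I chain the single-ball estimates along the $N\le C_{\mathrm{ch}}j$ balls.

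\emph{Single-ball estimate.} Fix a ball $B_k=B(x_k,r_k)$ and set $\tilde w(\xi)=w(x_k+r_k\xi)$ on the unit disk. Then
\[
-\Delta\tilde w+\tilde{\mathfrak q}\,\tilde w=0,
\qquad
\tilde{\mathfrak q}(\xi)=r_k^2\,\mathfrak q(x_k+r_k\xi),
\]
and
\[
0\le\tilde{\mathfrak q}\le C_{\mathfrak q}r_k^2\ell^{-2}\le C_{\mathfrak q}r_+^2=:\Lambda .
\]
The interior Harnack inequality for positive solutions of $-\Delta\tilde w+\tilde{\mathfrak q}\tilde w=0$ with $\|\tilde{\mathfrak q}\|_\infty\le\Lambda$ applies here; this is the Moser/Trudinger theory, \cite[Theorem~8.20]{GT01}. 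It gives a constant $H_0=H_0(\Lambda)$ with
\[
\sup_{\frac12 B}\tilde w\le H_0\inf_{\frac12 B}\tilde w
\]
on the unit disk $B$, with the half-ball compactly inside. Scaling back yields
\[
\sup_{\frac12B_k}w\le H_0\inf_{\frac12B_k}w.
\]
The key point is that $H_0$ depends only on $C_{\mathfrak q}$ and $r_+$. It depends on neither $r_k$ nor $\ell$, because the hypothesis $r_k\le r_+\ell$ makes the rescaled potential bounded uniformly. If one prefers to avoid general divergence-form theory, an elementary route also works: since $\tilde{\mathfrak q}\ge0$, $\tilde w$ is subharmonic, and $\tilde w$ equals the harmonic function with its boundary values minus a Green potential of $\tilde{\mathfrak q}\tilde w$. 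However, the cited Harnack inequality is cleaner, and I would use it.

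\emph{Chaining.} For each $k<N$ pick $p_k\in\frac12B_k\cap\frac12B_{k+1}$, and set $p_0=x$, $p_N=y$. Applying the single-ball estimate in $B_{k+1}$ to the two points $p_k,p_{k+1}\in\frac12B_{k+1}$ gives
\[
w(p_{k+1})\ge H_0^{-1}w(p_k).
\]
This step uses positivity of $w$ on each full ball, which holds since the chain lies in the domain of $w$. Iterating over the $N$ steps gives
\[
w(y)\ge H_0^{-N}w(x)\ge H_0^{-C_{\mathrm{ch}}j}w(x),
\]
using $H_0\ge1$ and $N\le C_{\mathrm{ch}}j$. We may therefore take $H=\max\{H_0,2\}^{C_{\mathrm{ch}}}>1$.

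\emph{Main obstacle.} The only delicate point is the uniformity of the Harnack constant in the scale. This is handled entirely by the rescaling, through the bound $\ell^{-2}r_k^2\le r_+^2$. One should also confirm that the Harnack inequality holds with merely measurable bounded $\mathfrak q$; it does, since the $C^2$ solution is in particular a weak $W^{1,2}$ solution. Finally, $j\ge1$ ensures that the exponent bound $N\le C_{\mathrm{ch}}j$ is meaningful.
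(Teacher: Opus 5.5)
Your proposal is correct and follows essentially the same route as the paper: rescale each ball $B_k$ to unit size so that the potential is bounded by $C_{\mathfrak q}r_+^2$, apply the interior Harnack inequality \cite[Theorem~8.20]{GT01} on half-balls, and chain through the $N\le C_{\mathrm{ch}}j$ overlapping half-balls. Your explicit intermediate points $p_k$ and your choice $H=\max\{H_0,2\}^{C_{\mathrm{ch}}}$, which guarantees $H>1$, only spell out the paper's iteration step a little more carefully.
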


\begin{proof}
Writing the equation as \(\Delta w-\mathfrak q w=0\), the zero-order coefficient is
nonpositive.  After the change of variables \(z=x_k+r_k\zeta\), the equation
on the unit ball has coefficient \(r_k^2\mathfrak q(x_k+r_k\zeta)\), bounded above by
\(r_+^2C_{\mathfrak q}\).  Thus the interior Harnack inequality
\cite[Theorem~8.20]{GT01} gives
\[
\sup_{\frac12B_k}w\le C_H\inf_{\frac12B_k}w
\]
with \(C_H\) depending only on \(C_{\mathfrak q},r_+\).  Since consecutive half-balls
intersect, iteration over \(N\le C_{\mathrm{ch}}j\) balls gives
\(w(y)\ge C_H^{-C_{\mathrm{ch}}j}w(x)\).  Taking \(H=C_H^{C_{\mathrm{ch}}}\) proves the claim.
\end{proof}

\section{A Moran construction at the endpoint}
\label{sec:moran-construction}

We now build the measure that will contradict the endpoint correspondence.  The
parameters are chosen so that the geometric series controlling the
$\alpha$-Beurling--Carleson condition converges, while the series driving the
PDE energy diverges.

Choose a logarithmic exponent
\begin{equation}
1+\alpha<\mathsf B\le2.
\label{eq:B-choice}
\end{equation}
For $n\ge3$, let
\begin{equation}
a_n=\frac1{n(\log n)^{\mathsf B}}.
\label{eq:a-n}
\end{equation}
Then
\begin{equation}
\sum_{n\ge3}a_n<\infty.
\label{eq:a-sum}
\end{equation}
Fix the Whitney-localization constants
\begin{equation}
c_*=8,
\qquad \tau=1.
\label{eq:cstar-tau-fixed}
\end{equation}
In the final energy estimate we shall use boxes lying in the strip
$6\ell\le\delta\le10\ell$ and Harnack chains of length $O(j)$.  Let $H>1$ be a
Harnack constant supplied by Lemma~\ref{lem:harnack-chain} for this fixed
geometry and for a zero-order coefficient bounded by \(C\ell^{-2}\), as follows from
\eqref{eq:U-asymptotic}.  Fix \(\kappa>0\) so small that
\begin{equation}
\kappa\log H<1.
\label{eq:kappa-small}
\end{equation}
For $n\ge3$ set
\begin{equation}
k_n=\max\left\{1,\left\lfloor \kappa\log\frac1{a_n}\right\rfloor\right\}.
\label{eq:k-n}
\end{equation}
The integer \(k_n\) is the number of Harnack boxes assigned to each
generation-\(n\) private gap.  By \eqref{eq:a-n} and \eqref{eq:k-n},
\(k_n\asymp\log n\).  Hence
\begin{equation}
\sum_{n\ge3}a_nk_n^\alpha<\infty,
\label{eq:a-kalpha-sum}
\end{equation}
since \eqref{eq:B-choice} gives $\mathsf B-\alpha>1$, whereas
\begin{equation}
\sum_{n\ge3}a_nk_n=\infty,
\label{eq:a-k-diverge}
\end{equation}
since \eqref{eq:B-choice} also gives $\mathsf B-1\le1$.

We now construct a compact set $E\subset\T$.  Work first inside a fixed closed
arc $I_*\subset\T$ of length $|I_*|<10^{-3}$.  Put \(\calI_2=\{I_*\}\), \(N_2=1\), and \(\ell_2=|I_*|\).
Assume that the generation $n-1$ family $\calI_{n-1}$ has already been
constructed; it consists of $N_{n-1}$ pairwise disjoint closed arcs, all of
length $\ell_{n-1}$.  Choose a large integer $b_n\ge2$ and define
\begin{equation}
N_n=N_{n-1}b_n,
\qquad
\ell_n=\left(\frac{a_n}{N_n}\right)^{1/\alpha}.
\label{eq:N-ell}
\end{equation}
Let
\begin{equation}
g_n=100k_n\ell_n.
\label{eq:g-n}
\end{equation}
Write a parent as \(I=[x_I,x_I+\ell_{n-1}]\).  For \(1\le i\le b_n\), define
\[
I_{n,I,i}
=
[x_I+(i-1)(\ell_n+g_n),\,x_I+(i-1)(\ell_n+g_n)+\ell_n],
\]
and the following private gap
\[
\Gap_{n,I,i}
=
(x_I+(i-1)(\ell_n+g_n)+\ell_n,\,x_I+i(\ell_n+g_n)).
\]
The terminal residual gap is
\[
R_{n,I}=(x_I+b_n(\ell_n+g_n),\,x_I+\ell_{n-1}).
\]
Set
\[
\calI_n=\{I_{n,I,i}:I\in\calI_{n-1},\ 1\le i\le b_n\}.
\]
Complementary arcs of \(E\) are obtained, up to endpoints, by coalescing
these private and residual gaps.

The integer $b_n$ can be chosen so that
\begin{equation}
b_n(\ell_n+g_n)\le \frac12\ell_{n-1},
\qquad
N_n\ell_n\le 2^{-n},
\qquad
\ell_n\le 2^{-20}\ell_{n-1}.
\label{eq:packing-requirements}
\end{equation}
Indeed,
\[
b_n(\ell_n+g_n)
\le C k_n b_n\ell_n
= C k_n a_n^{1/\alpha}N_{n-1}^{-1/\alpha}b_n^{1-1/\alpha}
\longrightarrow0
\]
as $b_n\to\infty$, because $1-1/\alpha<0$.  Also
$N_n\ell_n=a_n^{1/\alpha}N_n^{1-1/\alpha}\to0$ as $N_n\to\infty$, again because
$1-1/\alpha<0$.  Finally, $\ell_n=(a_n/N_n)^{1/\alpha}\to0$ as
$N_n\to\infty$.

Let $E_n$ be the union of the arcs in $\calI_n$, and define
\[
E=\bigcap_{n\ge3}E_n.
\]
Then $E$ is compact and, by \eqref{eq:packing-requirements},
$|E_n|=N_n\ell_n\le2^{-n}$.  Hence $|E|=0$.

\begin{lemma}
\label{lem:E-alpha-BC}
The set $E$ is an $\alpha$-Beurling--Carleson set.
\end{lemma}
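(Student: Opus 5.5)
We need to show that the sum of \(|J|^\alpha\) over the complementary arcs \(J\) of \(E\) is finite. The plan is to compare each complementary arc with the gaps created in the construction and then sum generation by generation.

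\emph{Structure of the complementary arcs.} Every complementary arc of \(E\), other than the outer arc \(\T\setminus I_*\), is a maximal open interval inside \(I_*\setminus E\). Such an arc is created at some first generation \(n\ge3\). It lies inside some parent \(I\in\calI_{n-1}\) and consists of a private gap \(\Gap_{n,I,i}\) or the residual gap \(R_{n,I}\). At later generations it may be enlarged by the part of the adjacent child arc that is removed. For a private gap next to a child \(I_{n,I,i+1}\), the child's own first subarc starts at its left endpoint, so that side gains nothing. On the other side, the last subarc ends before the residual gap of the previous child, so the enlargement comes only from residual gaps of lower generations. In every case \(|J|\le|\Gap_{n,I,i}|+\ell_n\) or \(|J|\le\ell_{n-1}\). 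It is simplest to avoid tracking the exact coalescence and use subadditivity instead: since \(0<\alpha<1\), \((x+y)^\alpha\le x^\alpha+y^\alpha\). Each complementary arc, up to endpoints, is a disjoint union of gaps \(\Gap_{n,I,i}\) and \(R_{n,I}\), so
\[
\sum_J|J|^\alpha\le|\T\setminus I_*|^\alpha+\sum_{n\ge3}\sum_{I\in\calI_{n-1}}\Bigl(\sum_{i=1}^{b_n}|\Gap_{n,I,i}|^\alpha+|R_{n,I}|^\alpha\Bigr).
\]
The main point to verify is that a complementary arc is indeed a countable disjoint union of these gaps modulo a null set of points of \(E\). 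Because both endpoints of each child belong to \(E\), the gaps of all generations tile \(I_*\setminus E\). Each \(J\) meets only gaps contained in it and is covered by them, so the subadditivity can be applied by countable additivity (an increasing limit of finite unions).

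\emph{Private gaps.} There are \(N_{n-1}b_n=N_n\) private gaps of length \(g_n=100k_n\ell_n\). Using \(\ell_n^\alpha=a_n/N_n\) from \eqref{eq:N-ell}, their contribution is
\[
N_n(100k_n)^\alpha\ell_n^\alpha=100^\alpha a_nk_n^\alpha,
\]
which is summable by \eqref{eq:a-kalpha-sum}.

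\emph{Residual gaps.} There are \(N_{n-1}\) residual gaps, each of length at most \(\ell_{n-1}\), so their contribution is at most \(N_{n-1}\ell_{n-1}^\alpha=a_{n-1}\) for \(n\ge4\), and at most \(|I_*|^\alpha\) for \(n=3\). This is summable by \eqref{eq:a-sum}.

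Adding the three pieces shows that the total sum is finite. Together with the facts already established — \(E\) is closed and \(|E|=0\) — this proves the lemma.
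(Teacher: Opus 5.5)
Your proposal is correct and is essentially the paper's proof. You bound $\sum_J|J|^\alpha$ by the sum over the outer, private and residual gaps using subadditivity of $t\mapsto t^\alpha$, then compute $N_n(100k_n\ell_n)^\alpha=100^\alpha a_nk_n^\alpha$ and $N_{n-1}\ell_{n-1}^\alpha=a_{n-1}$.

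One harmless slip: the right endpoint of each construction arc is not in $E$, because $b_{n+1}(\ell_{n+1}+g_{n+1})\le\frac12\ell_n$ means it is removed at the next generation. So the gaps tile $I_*\setminus E$ only up to a countable set of points outside $E$, which is all the subadditivity argument needs. For the same reason, your unused structural bound $|J|\le\ell_{n-1}$ is not accurate, since residual gaps coalesce across the parent's right endpoint.
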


\begin{proof}
Let \(\mathcal R\) be the family consisting of the initial outer gap, all
private gaps, and all residual gaps.  Its members are disjoint up to endpoints,
and no later generation enters a gap that has already been removed.
Consequently, up to endpoints, every complementary arc of \(E\) is obtained by
coalescing adjacent members of \(\mathcal R\).  Since
\(0<\alpha<1\),
$
(a+b)^\alpha\le a^\alpha+b^\alpha,
$
and therefore
\[
\sum_{J\subset\T\setminus E}|J|^\alpha
\le
\sum_{R\in\mathcal R}|R|^\alpha.
\]
At generation \(n\), there are \(N_n\) private gaps, each of length
\(g_n\) given by \eqref{eq:g-n}.  By \eqref{eq:a-kalpha-sum}, their total
contribution is
\[
100^\alpha\sum_{n\ge3}N_nk_n^\alpha\ell_n^\alpha
=
100^\alpha\sum_{n\ge3}a_nk_n^\alpha
<\infty.
\]
There is one residual gap per generation-\((n-1)\) parent, each of length at
most \(\ell_{n-1}\).  Hence, by \eqref{eq:a-sum}, their contribution is at most
\[
|I_*|^\alpha+\sum_{n\ge4}N_{n-1}\ell_{n-1}^\alpha
=
|I_*|^\alpha+\sum_{n\ge4}a_{n-1}
<\infty.
\]
The outer gap contributes one finite term.
\end{proof}

Define the Moran probability measure $\mu$ as follows.  Each generation $n$ arc
has mass
\begin{equation}
\mu(I)=\frac1{N_n},
\qquad I\in\calI_n.
\label{eq:mu-cylinder}
\end{equation}
The cylinder sets $E\cap I$, $I\in\calI_n$, form a partition of $E$,
and these masses are consistent under refinement.  They therefore define a
probability measure supported on $E$.
Since $b_n\ge2$, one has $N_n\to\infty$, and the cylinder masses tend uniformly
to zero; hence $\mu$ is nonatomic.

By Lemma~\ref{lem:E-alpha-BC}, $\mu$ is supported on a single
$\alpha$-Beurling--Carleson set.  Thus
\begin{equation}
\mu\in\calM_\alpha(\T).
\label{eq:mu-in-Malpha}
\end{equation}
For a generation $n$ arc $I$, we have
\begin{equation}
\frac{\mu(I)}{|I|^\alpha}
=\frac{1/N_n}{\ell_n^\alpha}
=\frac1{a_n}=:D_n.
\label{eq:D-n}
\end{equation}
Thus $D_n\to\infty$, and by \eqref{eq:k-n},
\begin{equation}
k_n\le \kappa\log D_n
\label{eq:k-log-D}
\end{equation}
for all sufficiently large $n$.

\section{Proof of the counterexample}
\label{sec:proof-counterexample}

We now combine the endpoint measure constructed in
Section~\ref{sec:moran-construction} with the localization machinery from
Section~\ref{sec:minimal-fine-localization}.

\begin{proof}[Proof of Theorem~\ref{thm:main-counterexample}]
Assume, for contradiction, that the measure $\mu$ constructed above is the
deficiency measure of a nearly maximal solution $u$.  Set \(v=U-u\).
By Lemma~\ref{lem:energy-necessary}, the representation
\eqref{eq:Riesz} holds, and
\begin{equation}
\int_\D\delta(z)\calA(z,v(z))\,dA(z)<\infty.
\label{eq:assume-finite-energy}
\end{equation}
Because \(v\ge0\) and \(v=P\mu-\calG\lambda\), we also have
\begin{equation}
0\le\calG\lambda\le P\mu
\qquad\text{in }\D,
\label{eq:green-below-poisson}
\end{equation}
Together, \eqref{eq:assume-finite-energy} and
\eqref{eq:green-below-poisson} verify the hypotheses of
Lemma~\ref{lem:mf-whitney}.  We shall
derive a contradiction to \eqref{eq:assume-finite-energy}.

Apply Lemma~\ref{lem:mf-whitney}, with \(\varepsilon=1/2\), to a
sufficiently far tail of \((\ell_n)_{n\ge3}\), retaining the original
indexing. Thus, for some \(n_{\mathrm{loc}}\ge3\), there are increasing
measurable sets \(\Gamma_{1/2,N}\), \(N\ge n_{\mathrm{loc}}\), whose
union has full \(\mu\)-measure and for which the conclusion of the lemma
holds whenever \(n\ge N\). Since \(\mu(E)=1\), continuity from below
yields \(n_*\ge n_{\mathrm{loc}}\) such that
\[
\mu\bigl(\Gamma_{1/2,n_*}\cap E\bigr)>0.
\]
Set \(\Gamma=\Gamma_{1/2,n_*}\cap E\) and \(n_0=n_*\).
Since $\mu$ is supported on $E$,
\begin{equation}
\mu(\Gamma)>0.
\label{eq:Gamma-positive}
\end{equation}
For every $\zeta\in\Gamma$ and every $n\ge n_0$, there exists
\begin{equation}
z_{\zeta,n}\in
\overline{B\bigl((1-c_*\ell_n)\zeta,\tfrac12\tau\ell_n\bigr)}
\label{eq:selected-whitney-point-general}
\end{equation}
such that
\begin{equation}
\calG\lambda(z_{\zeta,n})\le\frac12P\mu(z_{\zeta,n}).
\label{eq:green-half-on-gamma}
\end{equation}
In particular,
\begin{equation}
(c_*-\tfrac12\tau)\ell_n
\le \delta(z_{\zeta,n})
\le (c_*+\tfrac12\tau)\ell_n.
\label{eq:good-point-height}
\end{equation}

For large \(n\), let
\(\calI_n(\Gamma)=\{I\in\calI_n:\mu(I\cap\Gamma)>0\}\).
Since the generation \(n\) arcs form a partition modulo endpoint sets of
\(\mu\)-measure zero and, by \eqref{eq:mu-cylinder}, each has mass \(1/N_n\),
\[
\mu(\Gamma)=\sum_{I\in\calI_n(\Gamma)}\mu(\Gamma\cap I)
\le\frac{\#\calI_n(\Gamma)}{N_n}.
\]
Consequently,
\begin{equation}
\#\calI_n(\Gamma)\ge \mu(\Gamma)N_n.
\label{eq:good-arcs-count}
\end{equation}
For each $I\in\calI_n(\Gamma)$ choose $\zeta_I\in I\cap\Gamma$, and then choose
$z_I=z_{\zeta_I,n}$ as in \eqref{eq:selected-whitney-point-general}.  By
\eqref{eq:green-half-on-gamma},
\begin{equation}
v(z_I)=P\mu(z_I)-\calG\lambda(z_I)
\ge \frac12P\mu(z_I).
\label{eq:v-ge-half-P}
\end{equation}
Moreover, by \eqref{eq:good-point-height} and because $\zeta_I\in I$ while
$|I|=\ell_n$, for every $\xi\in I$,
\begin{equation}
\delta(z_I)\asymp\ell_n,
\qquad |z_I-\xi|\le C\ell_n.
\label{eq:zI-geometry}
\end{equation}
For all large \(n\), \(|z_I|\ge1/2\); moreover, \eqref{eq:zI-geometry}
gives
\[
1-|z_I|^2=(1+|z_I|)\delta(z_I)\ge c\ell_n,
\qquad |z_I-\xi|^2\le C\ell_n^2.
\]
Thus the Poisson kernel satisfies \(P(z_I,\xi)\ge c/\ell_n\) for every
\(\xi\in I\), and hence
\begin{equation}
P\mu(z_I)
\ge c\frac{\mu(I)}{\ell_n}
=c\frac1{N_n\ell_n}.
\label{eq:P-lower}
\end{equation}
Using \eqref{eq:N-ell}, \eqref{eq:alpha-gamma}, and \eqref{eq:D-n}, we get
\begin{equation}
\frac1{N_n\ell_n}
=\frac1{a_n}\ell_n^{-\gamma}
=D_n\ell_n^{-\gamma}.
\label{eq:density-amplification}
\end{equation}
Combining \eqref{eq:v-ge-half-P}, \eqref{eq:P-lower}, and
\eqref{eq:density-amplification}, we arrive at
\begin{equation}
v(z_I)\ge cD_n\ell_n^{-\gamma}.
\label{eq:v-large-at-I}
\end{equation}

We propagate this lower bound from $z_I$ into the private gap following $I$.  Let $\Gap_I$ be the private gap immediately
after the generation $n$ arc $I$.  It has length $100k_n\ell_n$.  Identifying
$\Gap_I$ with the interval $(0,100k_n\ell_n)$ so that $0$ is the endpoint adjacent
to $I$, let $J_{I,j}$ be the subarc corresponding to
\[
\big[(6j-2)\ell_n,6j\ell_n\big],
\qquad 1\le j\le k_n.
\]
Then the arcs $J_{I,j}$ have length $2\ell_n$, are separated from each other
and from the endpoints of $\Gap_I$ by at least $4\ell_n$, and satisfy
$\operatorname{dist}(I,J_{I,j})\le Cj\ell_n$.

Let \(J'_{I,j}\) be the middle half of \(J_{I,j}\), and let
\(\xi_{I,j}\) be the midpoint of \(J'_{I,j}\).  Define
\[
Q_{I,j}
=\{r\xi:\xi\in J_{I,j},\ 7\ell_n<1-r<9\ell_n\}
\]
and
\[
Q'_{I,j}
=\{r\xi:\xi\in J'_{I,j},\ \tfrac{15}{2}\ell_n<1-r<\tfrac{17}{2}\ell_n\}.
\]
Put \(w_{I,j}=(1-8\ell_n)\xi_{I,j}\).  For all sufficiently large \(n\),
\(Q'_{I,j}\Subset Q_{I,j}\) and, uniformly in \(I\) and \(j\),
\begin{equation}
\operatorname{dist}\bigl(Q'_{I,j},\D\setminus Q_{I,j}\bigr)
\ge c\ell_n.
\label{eq:box-buffer}
\end{equation}
Indeed, the radial and angular margins are both \(\ell_n/2\), and the
radial coordinate is bounded below by \(1/2\).  Also,
\begin{equation}
6\ell_n\le\delta(z)\le10\ell_n,
\qquad z\in Q_{I,j},
\label{eq:box-height}
\end{equation}
and, since \(|J'_{I,j}|=\ell_n\),
\begin{align}
|Q'_{I,j}|
&=\pi|J'_{I,j}|
\left[\left(1-\frac{15}{2}\ell_n\right)^2
-\left(1-\frac{17}{2}\ell_n\right)^2\right] \notag\\
&\asymp\ell_n^2.
\label{eq:box-area}
\end{align}

Define the linearization coefficient along \(v\) by
\[
\mathfrak q_v(z)=\int_0^1 m\,(U(z)-\vartheta v(z))_+^{m-1}\,d\vartheta.
\]
Then
\[
\calA(z,v(z))
=\int_0^{v(z)}m\,(U(z)-t)_+^{m-1}\,dt
=\mathfrak q_v(z)v(z),
\]
and
\[
0\le\mathfrak q_v(z)\le mU(z)^{m-1}.
\]
Thus \(\mathfrak q_v\in L^\infty_{\mathrm{loc}}(\D)\).  Together with
\eqref{eq:linear-sign}, the preceding identity gives
$
-\Delta v+\mathfrak q_v v=0.
$
The deficiency trace of \(v\) is the nonzero measure \(\mu\), so
\(v\not\equiv0\).  Since \(v\ge0\), the strong maximum principle gives
\(v>0\) in \(\D\).

For all sufficiently large \(n\), \eqref{eq:U-asymptotic} and
\eqref{eq:box-height} give
\[
0\le\mathfrak q_v(z)\le C\ell_n^{-2}
\]
on every chain contained in the strip \eqref{eq:box-height}.  By
\eqref{eq:cstar-tau-fixed}, the selected point \(z_I\) also satisfies
\[
\frac{15}{2}\ell_n\le\delta(z_I)\le\frac{17}{2}\ell_n.
\]
Since \(|z_I-(1-8\ell_n)\zeta_I|\le\ell_n/2\) and \(|z_I|\ge1/2\),
the angular distance between \(z_I/|z_I|\) and \(\zeta_I\) is \(O(\ell_n)\).
Together with \(\operatorname{dist}(I,J_{I,j})\le Cj\ell_n\), this shows
that the following path has length \(O(j\ell_n)\): first join \(z_I\)
radially to the circle \(\delta=8\ell_n\), and then follow the shorter
circular arc to \(w_{I,j}\).  Cover it by balls of
radius \(\ell_n/4\) whose successive centres are at distance at most
\(\ell_n/8\).  These balls stay in
\(6\ell_n<\delta<10\ell_n\), their half-balls overlap, and their number is
\(O(j)\).  Lemma~\ref{lem:harnack-chain}, applied with
\(\mathfrak q=\mathfrak q_v\) and with the fixed constant \(H\) chosen
before the construction, therefore gives
\begin{equation}
v(w_{I,j})\ge H^{-j}v(z_I).
\label{eq:v-Harnack-propagation}
\end{equation}
By \eqref{eq:kappa-small} and \eqref{eq:k-log-D},
\[
H^{-k_n}D_n\ge D_n^{1-\kappa\log H}
\]
for all sufficiently large $n$.  In particular, there is $c_{\mathrm{HD}}>0$ such that
\begin{equation}
H^{-k_n}D_n\ge c_{\mathrm{HD}}
\label{eq:HD-lower}
\end{equation}
for all large $n$.  Combining \eqref{eq:v-large-at-I},
\eqref{eq:v-Harnack-propagation}, and \eqref{eq:HD-lower}, we obtain
\begin{equation}
v(w_{I,j})\ge c\ell_n^{-\gamma},
\qquad 1\le j\le k_n.
\label{eq:v-lower-centers}
\end{equation}
Starting from \eqref{eq:v-lower-centers}, join any \(z\in Q'_{I,j}\) to
\(w_{I,j}\) by a radial segment
and then a circular arc inside \(Q'_{I,j}\).  This path has length at most
\(C\ell_n\).  By \eqref{eq:box-buffer}, it can be covered by \(O(1)\)
balls of radius \(c_{\mathrm{ball}}\ell_n\), where \(c_{\mathrm{ball}}>0\) is fixed, with overlapping
half-balls and with every ball contained in \(Q_{I,j}\).  Applying
Lemma~\ref{lem:harnack-chain} once
more and absorbing the fixed loss into the constant gives
\begin{equation}
v(z)\ge c\ell_n^{-\gamma},
\qquad z\in Q'_{I,j}.
\label{eq:v-lower-boxes}
\end{equation}
For \(z\in Q'_{I,j}\),
\eqref{eq:box-height} and \eqref{eq:v-lower-boxes} give
\(v(z)\delta(z)^\gamma\ge c>0\).
Consequently the minimum in \eqref{eq:defect-two-sided} is bounded below by a
positive constant independent of \(n,I,j\).  Lemma~\ref{lem:defect-estimates}
therefore yields
\begin{equation}
\calA(z,v(z))\ge c\delta(z)^{-\gamma-2}
\ge c\ell_n^{-\gamma-2},
\qquad z\in Q'_{I,j}.
\label{eq:defect-lower-boxes}
\end{equation}
By \eqref{eq:box-height}, \eqref{eq:box-area}, and
\eqref{eq:defect-lower-boxes}, each selected box contributes at least
\begin{align}
\int_{Q'_{I,j}}\delta(z)\calA(z,v(z))\,dA(z)
&\ge c\ell_n\cdot\ell_n^{-\gamma-2}\cdot\ell_n^2 \notag\\
&=c\ell_n^{1-\gamma}=c\ell_n^\alpha.
\label{eq:one-box-energy}
\end{align}

Increasing \(n_0\) if necessary, we may assume that
\eqref{eq:k-log-D}, \eqref{eq:HD-lower}, the Whitney localization, and all
geometric comparability estimates hold for every \(n\ge n_0\).

The boxes \(Q'_{I,j}\) are pairwise disjoint.  Indeed, their angular bases
are compactly contained in private gaps.  Private gaps of the same
generation are disjoint by construction; if a private gap is created at
generation \(n\), all later descendants are placed inside the remaining
generation \(n\) child arcs and hence are disjoint from that gap.  Within
one private gap, the intervals \(J_{I,j}\) are separated.  The shrinkage
to \(Q'_{I,j}\) removes possible boundary contacts.  Using
\eqref{eq:Gamma-positive} and summing \eqref{eq:one-box-energy} over all
sufficiently large $n$ and over
$I\in\calI_n(\Gamma)$ gives
\begin{align}
\int_\D\delta(z)\calA(z,v(z))\,dA(z)
&\ge c\sum_{n\ge n_0}\#\calI_n(\Gamma)k_n\ell_n^\alpha \notag\\
&\ge c\mu(\Gamma)\sum_{n\ge n_0}N_nk_n\ell_n^\alpha \notag\\
&=c\mu(\Gamma)\sum_{n\ge n_0}a_nk_n
=\infty,
\label{eq:energy-diverges}
\end{align}
where we used \eqref{eq:good-arcs-count}, \eqref{eq:N-ell}, and
\eqref{eq:a-k-diverge}.  The divergence in \eqref{eq:energy-diverges}
contradicts \eqref{eq:assume-finite-energy}.  Therefore, by
\eqref{eq:mu-in-Malpha}, the constructed measure belongs to
\(\calM_\alpha(\T)\setminus\Def_m\).  Together with the first inclusion in
\eqref{eq:intro-IN-inclusions}, this proves
\(\Def_m\ne\calM_\alpha(\T)\).
\end{proof}

\begin{remark}
There is no conflict with the positive part of \cite[Theorem~1.3]{IN24}.
That result gives constructibility only for measures concentrated on countable
unions of \(\beta\)-Beurling--Carleson sets with \(\beta<\alpha\). Since the
measure constructed above is not a deficiency measure, it cannot belong to any
of those smaller classes.
\end{remark}

\begin{remark}
The endpoint nature of the construction is reflected in the two competing
series.  The geometric $\alpha$-Beurling--Carleson condition requires
$
\sum_n a_nk_n^\alpha<\infty,
$
whereas the PDE argument produces the divergent series
$
\sum_n a_nk_n=\infty.
$
Because $0<\alpha<1$, one can choose $a_n$ and $k_n$ so that the first series
converges while the second diverges.
\end{remark}


\begin{thebibliography}{99}
\bibitem{Ahe79}
P.~Ahern,
\emph{The mean modulus and the derivative of an inner function},
Indiana Univ. Math. J. \textbf{28} (1979), no.~2, 311--347,
doi: \href{https://doi.org/10.1512/iumj.1979.28.28022}{10.1512/iumj.1979.28.28022}.

\bibitem{AC74}
P.~R. Ahern and D.~N. Clark,
\emph{On inner functions with {$H^p$} derivative},
Michigan Math. J. \textbf{21} (1974), 115--127,
doi: \href{https://doi.org/10.1307/mmj/1029001255}{10.1307/mmj/1029001255}.

\bibitem{AG01}
D.~H. Armitage and S.~J. Gardiner,
\emph{Classical potential theory},
Springer Monographs in Mathematics, Springer, London, 2001,
doi: \href{https://doi.org/10.1007/978-1-4471-0233-5}{10.1007/978-1-4471-0233-5}.

\bibitem{BM92}
C.~Bandle and M.~Marcus,
\emph{``Large'' solutions of semilinear elliptic equations: existence,
uniqueness and asymptotic behaviour},
J. Anal. Math. \textbf{58} (1992), 9--24,
doi: \href{https://doi.org/10.1007/BF02790355}{10.1007/BF02790355}.

\bibitem{BM98}
C.~Bandle and M.~Marcus,
\emph{On second-order effects in the boundary behaviour of large solutions
of semilinear elliptic problems},
Differential Integral Equations \textbf{11} (1998), no.~1, 23--34,
doi: \href{https://doi.org/10.57262/die/1367414131}{10.57262/die/1367414131}.

\bibitem{Beu40}
A.~Beurling,
\emph{Ensembles exceptionnels},
Acta Math. \textbf{72} (1940), 1--13,
doi: \href{https://doi.org/10.1007/BF02546325}{10.1007/BF02546325}.

\bibitem{Bur86}
K.~Burdzy,
\emph{Brownian excursions and minimal thinness, III: Applications to the
angular derivative problem},
Math. Z. \textbf{192} (1986), no.~1, 89--107,
doi: \href{https://doi.org/10.1007/BF01162023}{10.1007/BF01162023}.

\bibitem{Car52}
L.~Carleson,
\emph{Sets of uniqueness for functions regular in the unit circle},
Acta Math. \textbf{87} (1952), 325--345,
doi: \href{https://doi.org/10.1007/BF02392289}{10.1007/BF02392289}.

\bibitem{Cul71}
M.~R. Cullen,
\emph{Derivatives of singular inner functions},
Michigan Math. J. \textbf{18} (1971), no.~3, 283--287,
doi: \href{https://doi.org/10.1307/mmj/1029000692}{10.1307/mmj/1029000692}.

\bibitem{Doo59}
J.~L. Doob,
\emph{A non-probabilistic proof of the relative Fatou theorem},
Ann. Inst. Fourier (Grenoble) \textbf{9} (1959), 293--300,
doi: \href{https://doi.org/10.5802/aif.93}{10.5802/aif.93}.

\bibitem{Doob84}
J.~L. Doob,
\emph{Classical potential theory and its probabilistic counterpart},
Grundlehren der mathematischen Wissenschaften, vol.~262,
Springer-Verlag, New York, 1984,
doi: \href{https://doi.org/10.1007/978-1-4612-5208-5}
{10.1007/978-1-4612-5208-5}.

\bibitem{Dur70}
P.~L. Duren,
\emph{Theory of {$H^p$} spaces},
Pure and Applied Mathematics, vol.~38, Academic Press, New York--London, 1970.

\bibitem{GM05}
J.~B. Garnett and D.~E. Marshall,
\emph{Harmonic measure},
New Mathematical Monographs, vol.~2, Cambridge University Press, Cambridge, 2005,
doi: \href{https://doi.org/10.1017/CBO9780511546617}{10.1017/CBO9780511546617}.

\bibitem{GT01}
D.~Gilbarg and N.~S. Trudinger,
\emph{Elliptic partial differential equations of second order},
2nd ed., Classics in Mathematics, Springer, Berlin, 2001,
doi: \href{https://doi.org/10.1007/978-3-642-61798-0}{10.1007/978-3-642-61798-0}.

\bibitem{Ivrii19}
O.~Ivrii,
\emph{Prescribing inner parts of derivatives of inner functions},
J. Anal. Math. \textbf{139} (2019), no.~2, 495--519,
doi: \href{https://doi.org/10.1007/s11854-019-0064-0}{10.1007/s11854-019-0064-0}.

\bibitem{IN24}
O.~Ivrii and A.~Nicolau,
\emph{Beurling--Carleson sets, inner functions and a semilinear equation},
Anal. PDE \textbf{17} (2024), no.~7, 2585--2618,
doi: \href{https://doi.org/10.2140/apde.2024.17.2585}{10.2140/apde.2024.17.2585}.

\bibitem{Kel57}
J.~B. Keller,
\emph{On solutions of {$\Delta u=f(u)$}},
Comm. Pure Appl. Math. \textbf{10} (1957), 503--510,
doi: \href{https://doi.org/10.1002/cpa.3160100402}{10.1002/cpa.3160100402}.

\bibitem{Mas13}
J.~Mashreghi,
\emph{Derivatives of inner functions},
Fields Inst. Monogr., vol.~31, Springer, New York, 2013,
doi: \href{https://doi.org/10.1007/978-1-4614-5611-7}{10.1007/978-1-4614-5611-7}.

\bibitem{Oss57}
R.~Osserman,
\emph{On the inequality {$\Delta u\ge f(u)$}},
Pacific J. Math. \textbf{7} (1957), no.~4, 1641--1647,
doi: \href{https://doi.org/10.2140/pjm.1957.7.1641}{10.2140/pjm.1957.7.1641}.

\bibitem{Ponce16}
A.~C. Ponce,
\emph{Elliptic PDEs, measures and capacities: From the Poisson equation to
nonlinear Thomas--Fermi problems},
EMS Tracts in Mathematics, vol.~23, European Mathematical Society, Z\"urich, 2016,
doi: \href{https://doi.org/10.4171/140}{10.4171/140}.

\end{thebibliography}
\end{document}